\newcommand{\p}{^}
\newcommand{\wt}{\widetilde}
\newcommand{\ol}{\overline}
	\newcommand{\lsi}{\llbracket}
		\newcommand{\rsi}{\rrbracket}
	\newcommand{\rmd}{\mathrm{d}}
\newcommand{\rme}{\mathrm{e}}
\def\cro#1{\langle #1\rangle}
\newcommand{\Lrm}{\mathrm{L}}
\newcommand{\Var}{\mathrm{Var}}
\newcommand{\Ascr}{\mathscr{A}}
\newcommand{\Bscr}{\mathscr{B}}
\newcommand{\Fscr}{\mathscr{F}}
\newcommand{\Gscr}{\mathscr{G}}
\newcommand{\Hscr}{\mathscr{H}}
\newcommand{\Jscr}{\mathscr{J}}
\newcommand{\Lscr}{\mathscr{L}}
\newcommand{\Oscr}{\mathscr{O}}
\newcommand{\Pscr}{\mathscr{P}}
\newcommand{\Rscr}{\mathscr{R}}
\newcommand{\Sscr}{\mathscr{S}}
\newcommand{\Xscr}{\mathscr{X}}
\newcommand{\cX}{\mathscr{X}}
\newcommand{\Zscr}{\mathscr{Z}}
\newcommand{\Mloc}{\Hscr\p1_{\mathrm{loc}}}
\newcommand{\et}{\eta}
\newcommand{\al}{\alpha}
\newcommand{\lm}{\lambda}
\newcommand{\sig}{\sigma}
\newcommand{\om}{\omega}
\newcommand{\Om}{\Omega}
\newcommand{\cadlag}{c\`adl\`ag\ }
\newcommand{\Abb}{\mathbb{A}}
\newcommand{\Ebb}{\mathbb{E}}
\newcommand{\Fbb}{\mathbb{F}}
\newcommand{\Xbb}{\mathbb{X}}
\newcommand{\Nbb}{\mathbb{N}}
\newcommand{\Pbb}{\mathbb{P}}
\newcommand{\Rbb}{\mathbb{R}}
\newcommand{\Hbb}{\mathbb{H}}
\newcommand{\Gbb}{\mathbb{G}}
\newcommand{\aPP}[2]{\ensuremath{\langle #1,#2 \rangle}}
\newtheorem{theorem}{Theorem}[section]
\newtheorem{lemma}[theorem]{Lemma}
\newtheorem{proposition}[theorem]{Proposition}
\newtheorem{corollary}[theorem]{Corollary}
\theoremstyle{definition}
\newtheorem{cexample}[theorem]{Counterexample}
\newtheorem{assumption}[theorem]{Assumption}
\newtheorem{remark}[theorem]{Remark}
\numberwithin{equation}{section}
\def\timenow{\@tempcnta\time
\@tempcntb\@tempcnta
\divide\@tempcntb60
\ifnum10>\@tempcntb0\fi\number\@tempcntb
:\multiply\@tempcntb60
\advance\@tempcnta-\@tempcntb
\ifnum10>\@tempcnta0\fi\number\@tempcnta}
\title{Martingale Representation in the Enlargement \\of the Filtration Generated by a Point Process}
\author{Paolo Di Tella$^{1,2}$ and Monique Jeanblanc$^3$}
\date{}
\begin{document}
\maketitle
\begin{abstract}
Let $X$ be a point process and let $\Xbb$ denote the filtration generated by $X$. In this paper we study martingale representation theorems in the filtration $\Gbb$ obtained as an initial and progressive enlargement of the filtration $\Xbb$. The progressive enlargement is done here by means of a whole point process $H$. We do not require further assumptions on the point process $H$ nor on the dependence between $X$ and $H$. In particular, we recover the special case of the progressive enlargement by a random time $\tau$.
 \end{abstract}
{\noindent
\footnotetext[1]{Corresponding author.}
\footnotetext[2]{ \emph{Adress:} Inst.\ f\"ur Mathematische Stochastik, TU Dresden. Zellescher Weg 12-14 01069 Dresden, Germany.

\emph{E-Mail: }{\tt Paolo.Di\_Tella{\rm@}tu-dresden.de}}

\footnotetext[3]{ \emph{Adress:} Laboratoire de Math\'ematiques et Mod\'elisation d' \'Evry (LaMME), UMR CNRS 8071, Universit\'e d' \'Evry-Val-d'Essonne, Universit\'e Paris Saclay, 23 Boulevard de France, 91037 \'Evry cedex, France.

\emph{E-Mail: }{\tt monique.jeanblanc{\rm@}uni-evry.fr}}
}

{\noindent \textit{Keywords:}  Point processes, martingale representation, progressive enlargement, initial enlargement, random measures.
}
\section{Introduction}
In this paper we study martingale representation theorems in the enlargement $\Gbb$ of the filtration $\Xbb$ generated by a point process $X$ (see Theorem \ref{thm:wrp.srp} below). The filtration $\Gbb$ here is obtained first enlarging $\Xbb$ initially by a sigma-field $\Rscr$ and then progressively by a whole point process $H$ (and not only by a random time). In other words, $\Gbb$ is the smallest right-continuous filtration containing $\Rscr$ and such that $X$ and $H$ are adapted. We show that in $\Gbb$ all local martingales can be represented as a stochastic integral with respect to a compensated random measure. Furthermore, we show a martingale representation theorem in $\Gbb$ with respect to \emph{three fundamental local martingales}. Due to the particular structure of the filtration $\Gbb$, we work without requiring any additional condition on the point processes $X$ and $H$. In this way, for the setting of point processes, we generalize all the results from literature (which we describe below) about the propagation of {martingale representation theorems} to an enlarged filtration. We notice that, thanks to the initial enlargement of $\Xbb$ by $\Rscr$, the important case of a point process $X$ with conditionally independent increments (Cox-point process) is also covered.

It is well-known that the multiplicity (see Davis and Varaiya \cite{dv:mi}) or the spanning number (see Duffie \cite{du:se}) of $\Xbb$ is equal to one. We shall show that the multiplicity of $\Gbb$ is, in general, less than or equal to three. However, we shall also discuss special cases in which the multiplicity of $\Gbb$ becomes less than or equal to two or it even reduces to one.

We now give an overview of the literature about the propagation of martingale representation theorems to the enlarged filtration. We denote by $\Fbb$ a reference filtration and by $\Gbb$ an enlargement of $\Fbb$. A local martingale $M$ is said to possess the predictable representation property (from now on the PRP) with respect to $\Fbb$ if all $\Fbb$-local martingales can be represented as a stochastic integral of predictable integrands  with respect to $M$.

In \cite{fo:sp}, Fontana proved that the PRP of a $d$-dimensional local martingale $M$ with respect to $\Fbb$ propagates to the \textit{initial} enlargement $\Gbb$ of $\Fbb$ by a random variable $L$ which satisfies \emph{Jacod's absolutely continuity hypothesis}. In this case, the multiplicity of $\Gbb$ remains less than or equal to $d$.

 A seminal paper about the propagation of the PRP to the \emph{progressive} enlargement $\Gbb$ by a random time $\tau$ is  Kusuoka \cite{Ku99}. In \cite{Ku99} the reference filtration $\Fbb$ is a Brownian filtration and $\tau$ is a random time satisfying, among other additional conditions, the \emph{immersion property} and \emph{avoiding} $\Fbb$-stopping times\footnote{A filtration $\Fbb\subseteq\Gbb$ is immersed in $\Gbb$ if  $\Fbb$-martingales remain $\Gbb$-martingales. Furthermore, we say that $\tau$ avoids $\Fbb$-stopping times if $\Pbb[\tau=\sig<+\infty]=0$ for all $\Fbb$-stopping time $\sig$.}. In the case of \cite{Ku99} the multiplicity of $\Gbb$ is equal to two. In \cite{js:tcr}, Jeanblanc and Song considered more general random times and they proved that the multiplicity of $\Gbb$ usually increases by one and for the case of honest times by two. In \cite{AJR18}, Aksamit, Jeanblanc and Rutkowski studied the propagation of the PRP to $\Gbb$ by a random time $\tau$ if $\Fbb$ is a Poisson filtration. The representation results obtained in \cite{AJR18} concern however only a special class of $\Gbb$-local martingales stopped at $\tau$.

Another important (and more general than the PRP) martingale representation theorem is the so called \emph{weak} representation property\footnote{The WRP is a representation of martingales as a sum of a stochastic integral of a predictable process with respect to a continuous local martingale plus a stochastic integral of a predictable function with respect to a compensated random measure, see \cite[Definition III.4.22]{JS00}.} (from now on WRP).

The propagation of the WRP to the progressive enlargement by a honest time $\tau$ has been established in \cite{ba:sf} by Barlow (see Theorem 6.1 therein).
More recently, under the immersion and the avoidance assumptions on $\tau$, the propagation of the WRP to $\Gbb$ has been established in Di Tella \cite{DT19} (see Theorem 5.3 therein).

If $\Fbb$ is progressively enlarged to $\Gbb$ by a \emph{whole process} instead of a random time, the propagation of martingale representation theorems to $\Gbb$ has been very little studied and all the existing results make use of independence assumptions.

For example, in \cite{X93}, Xue considered two semimartingales $X$ and $Y$ possessing the WRP each in its own filtration and, under the assumptions of the independence and the quasi-left continuity of $X$ and $Y$, the author shows that the WRP propagates to $\Gbb$. Similarly, in \cite{CT16}, Calzolari and Torti consider two local martingales $M$ and $N$ possessing the PRP each in its own filtration and then they study the propagation of the PRP to $\Gbb$. The assumptions in \cite{CT16} imply that $M$ and $N$ are independent (at least under an equivalent measure). In \cite{CT16} it is also shown that the multiplicity of $\Gbb$ is, in general, less than or equal to three. In  \cite{CT19}, the results of  \cite{CT16} are generalized to multidimensional martingales $M$ and $N$. A work-in-progress about the multiplicity of $\Gbb$ in this context is Calzolari and Torti \cite{CT17}. A more detailed comparison between  the existing literature and our results is postponed after the proof of Theorem  \ref{thm:wrp.srp} below.

The present paper has the following structure. In Section \ref{sec:bas}, we recall some basics on point processes, martingale theory and progressive enlargement of filtrations. In Section \ref{sec:PRP}, we establish the main results of the present paper, Theorem \ref{thm:wrp.srp} below, about the propagation of the WRP and the PRP. Section \ref{subs:orth} is devoted to the study of the orthogonality of the fundamental martingales which are used for the martingale representation in $\Gbb$. At the end of Section \ref{subs:orth} (see subsection \ref {subs:ex}) we study the propagation of the PRP in two examples, which, to the best of our knowledge, are new. In the Appendix we gather the proofs of some technical results.

\section{Basic Notions}\label{sec:bas}

Let $(\Om,\Fscr,\Pbb)$ be a probability space. We denote by $\Fbb=(\Fscr_t)_{t\geq0}$ a right-continuous filtration of subsets of $\Fscr$ and by $\Oscr(\Fbb)$ (resp.\ $\Pscr(\Fbb)$) the $\Fbb$-optional (resp.\ $\Fbb$-predictable) $\sig$-algebra on $\Om\times\Rbb_+$. For a \cadlag process $X$, we denote by $\Delta X$ the jump process and use the convention $\Delta X_0=0$.

For $p\geq1$, we denote by $\Hscr\p p(\Fbb)$ the space of $\Fbb$-adapted $p$-integrable martingales (see \cite[Chapter II, Section 1]{J79}). The space $\Hscr\p p_\mathrm{loc}(\Fbb)$ is introduced from $\Hscr\p p(\Fbb)$ by localization. The space $\Hscr\p1_\mathrm{loc}(\Fbb)$ coincides with the set  of \emph{all} $\Fbb$-local martingales (see \cite[Lemma 2.38]{J79}). We set $\Hscr\p p_0(\Fbb):=\{X\in\Hscr\p p(\Fbb): X_0=0\}$. The space  $\Hscr\p p_{\mathrm{loc},0}(\Fbb)$ is defined analogously.

We denote by $\Ascr\p+=\Ascr\p+(\Fbb)$ the space of $\Fbb$-adapted integrable increasing processes (see \cite[Definition I.3.1 and I.3.6]{JS00}) and by $\Ascr\p+_\mathrm{loc}=\Ascr\p+_\mathrm{loc}(\Fbb)$ the localized version of $\Ascr\p+$. By $\Ascr=\Ascr(\Fbb)$ (resp., $\Ascr_\mathrm{loc}=\Ascr_\mathrm{loc}(\Fbb)$)
we denote the space of  $\Fbb$-adapted processes of integrable (resp., locally integrable) variation.

For $X\in\Ascr_\mathrm{loc}$ and for a  nonnegative measurable process $K$, we indicate
 by $K\cdot X=(K\cdot X_t)_{t\geq0}$ the process defined by the (Lebesgue--Stieltjes) integral of $K$ with respect to $X$, that is, $K\cdot X_t(\om):=\int_0\p t K_s(\om)\rmd X_s(\om)$, if $\int_0\p t K_s(\om)\rmd \Var(X)_s(\om)<+\infty$, for every $\om\in\Om$ and $t\geq0$, where $t\mapsto\Var(X)_t(\om)$ denotes the total variation of $s\mapsto X_s(\om)$ on $[0,t]$, $t\geq0$.

\textbf{Point processes.} A \emph{point process} $X$ with respect to $\Fbb$ is an $\Nbb$-valued and $\Fbb$-adapted  increasing process such that $\Delta X\in\{0,1\}$. A point process $X$ is locally bounded and therefore $X\in\Ascr\p+_\mathrm{loc}$. We denote by $X\p{p,\Fbb}\in\Ascr\p+_\mathrm{loc}$ the $\Fbb$-dual predictable projection of $X$ (see \cite[Theorem I.3.17]{JS00}) and define $\overline X\p\Fbb:=X-X\p{p,\Fbb}\in\Mloc(\Fbb)\cap\Ascr_\mathrm{loc}(\Fbb)$. We recall that $\Delta X\p{p,\Fbb}\leq 1$ (see \cite[Corollary 5.28]{HWY92}).

For a point process $X$, $\Lrm\p p(\overline X\p\Fbb)=\Lrm\p p(\overline X\p\Fbb,\Fbb)$ indicates the space of $\Fbb$-predictable processes $K$ such that $(K\p2\cdot[\overline X\p\Fbb,\overline X\p\Fbb])\p{p/2}\in\Ascr\p+(\Fbb)$, where $[\cdot,\cdot]$ denotes the quadratic variation process. The space $\Lrm\p p_\mathrm{loc}(\overline X\p\Fbb)=\Lrm\p p_\mathrm{loc}(\overline X\p\Fbb,\Fbb)$ is defined analogously but making use of $\Ascr\p+_\mathrm{loc}(\Fbb)$ instead. For $K\in\Lrm_\mathrm{loc}\p1(\overline X\p\Fbb)$, $K\cdot \overline X\p\Fbb$ indicates the stochastic integral of $K$ with respect to $\overline X\p\Fbb$: It is a local martingale starting at zero and, by \cite[Proposition 2.46 (b)]{J79}, $K\cdot \overline X\p\Fbb\in\Hscr\p p(\Fbb)$ if and only if $K\in\Lrm\p p(\overline X\p\Fbb)$. We observe that if $K\in \Lrm_\mathrm{loc}\p1(\overline X\p\Fbb)$, then the stochastic integral $K\cdot \overline X\p\Fbb$ coincides with the Lebesgue--Stieltjes integral, whenever this latter one exists and is finite.

\textbf{Random measures.} For a Borel subset $E$ of $\Rbb\p d$, we introduce $\wt\Om:=\Om\times\Rbb_+\times E$ and the product $\sig$-algebras $\wt\Oscr(\Fbb):=\Oscr(\Fbb)\otimes\Bscr(E)$ and $\wt\Pscr(\Fbb):=\Pscr(\Fbb)\otimes\Bscr(E)$. If $W$ is an $\wt\Oscr(\Fbb)$-measurable (resp.\ $\wt\Pscr(\Fbb)$-measurable) mapping from $\wt\Om$ into $\Rbb$, it is called an $\Fbb$-optional (resp.\ $\Fbb$-predictable) function.

Let $\mu$ be a random measure  on $\Rbb_+\times E$ (see \cite[Definition II.1.3]{JS00}).
For a nonnegative $\Fbb$-optional function $W$, we write $W\ast\mu=(W\ast\mu_t)_{t\geq0}$, where $W\ast\mu_t(\om):=\int_{[0,t]\times E}W(\om,s,x)\mu(\om,\rmd s,\rmd x)$ is the process defined by the (Lebesgue--Stieltjes) integral of $W$ with respect to $\mu$ (see \cite[II.1.5]{JS00} for details). If $W\ast\mu$ is $\Fbb$-optional (resp.\ $\Fbb$-predictable), for every optional (resp.\ $\Fbb$-predictable) function $W$, then $\mu$ is called $\Fbb$-optional (resp.\ $\Fbb$-predictable).

\textbf{Semimartingales.}
For an $\Rbb\p d$-valued $\Fbb$-semimartingale $X$, we denote by $\mu\p X$ the jump measure of $X$, that is,
$
\mu\p{X}(\om,\rmd t,\rmd x)=\sum_{s>0}1_{\{\Delta X_s(\om)\neq0\}}\delta_{(s,\Delta X_s(\om))}(\rmd t,\rmd x)$, where, here and in the whole paper, $\delta_a$ denotes the Dirac measure at point $a$.
From \cite[Theorem II.1.16]{JS00}, $\mu\p X$ is an \emph{integer-valued random measure} on $\Rbb_+\times \Rbb\p d$ with respect to $\Fbb$ (see \cite[Definition II.1.13]{JS00}). Thus, $\mu\p X$ is, in particular, an $\Fbb$-optional random measure. According to \cite[Definition III.1.23]{JS00}, $\mu\p X$ is called an $\Rbb\p d$-valued \emph{marked point process} (with respect to $\Fbb$) if $\mu\p X(\om;[0,t]\times \Rbb\p d)<+\infty$, for every $\om\in\Om$ and $t\in\Rbb_+$. By $\nu\p X$ we denote the $\Fbb$-predictable compensator of $\mu\p X$ (see \cite[Definition II.2.6]{JS00}) and $\nu\p X$ is a predictable random measure characterized by the following properties: For any $\Fbb$-predictable function $W$ such that $|W|\ast\mu\p X\in\Ascr\p +_\mathrm{loc}$, we have $|W|\ast\nu\p X\in\Ascr\p +_\mathrm{loc}$ and $W\ast\mu\p X-W\ast\nu\p X\in\Mloc(\Fbb)$.

\section{Martingale Representation}\label{sec:PRP}
For a point process $X$, we introduce the filtration $\Xbb=(\Xscr_t)_{t\geq0}$ by $\Xscr_t:=\bigcap_{s>t}\sig(X_u,\ 0\leq u\leq s)$, $t\geq0$. Let $\Rscr$ be a $\sig$-field, called the \emph{initial} $\sig$-\emph{field}. The filtration $\Fbb=(\Fscr_t)_{t\geq0}$ is defined by $\Fscr_t:=\Rscr\vee\Xscr_t$. Thus, $\Fbb$ is the \emph{initial enlargement} of $\Xbb$ by the $\sig$-field $\Rscr$. Clearly, $X$ is a point process with respect to $\Fbb$. However, the choice of the $\sig$-field $\Rscr$ affects the predictable compensator of $X$: In general, $X\p{p,\Xbb}$ does not coincide with $X\p{p,\Fbb}$. A non-trivial $\Rscr$ allows to include in the theory developed in the present paper, without any additional effort, also the important case of a Cox-point process $X$ with respect to $\Fbb$, i.e., $X\p{p,\Fbb}$ is $\Fscr_0$-measurable or, equivalently,  $X$ has conditionally independent increments with respect to $\Fbb$ given $\Fscr_0$.

\begin{lemma}\label{lem:prp.pp}
\textnormal{(i)} The filtration $\Fbb$ is right-continuous.

\textnormal{(ii)} For every $Y\in\Mloc(\Fbb)$ there exists $K\in\Lrm\p1_\mathrm{loc}(\overline X\p{\Fbb})$ such that $Y=Y_0+K\cdot \overline X\p{\Fbb}$, that is, the $\Fbb$-local martingale $\overline X\p{\Fbb}:=X-X\p{p,\Fbb}$ possesses the \emph{PRP} with respect to $\Fbb$.
\end{lemma}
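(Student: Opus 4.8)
The plan is to prove (i) and (ii) separately, treating the initial $\sigma$-field $\Rscr$ as a harmless perturbation of the classical case $\Rscr=\{\emptyset,\Om\}$.

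For part (i), I would first recall the well-known fact that the natural filtration $\Xbb$ of a point process is right-continuous (this is classical; it follows, e.g., from the explicit description of $\Xscr_t$ in terms of the jump times $\tau_n=\inf\{t\ge0:X_t=n\}$ and the values between them). The passage from $\Xbb$ to $\Fbb=\Rscr\vee\Xbb$ is the standard initial enlargement. One approach is to invoke the general result that if a filtration $\Xbb$ is right-continuous and the jump times of $X$ are the only source of new information, then $\Rscr\vee\Xbb$ is again right-continuous; but since the paper is self-contained I would instead give the direct argument. Write $\Xscr_t^0:=\sigma(X_u:0\le u\le t)$, so that $\Xscr_t=\bigcap_{s>t}\Xscr_s^0$. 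The key point is that the point process $X$ generates, up to time $t$, the same information as the finite sequence of its jump times in $[0,t]$ together with the knowledge of how many there are; from the right-continuity of $t\mapsto X_t$ one gets $\Xscr_t^0=\Xscr_t$ for every $t$ (no enlargement is actually needed at a fixed time), and more to the point $\bigcap_{s>t}(\Rscr\vee\Xscr_s)=\Rscr\vee\bigcap_{s>t}\Xscr_s=\Rscr\vee\Xscr_t=\Fscr_t$. The one step that needs care — and is the main obstacle in (i) — is justifying the interchange $\bigcap_{s>t}(\Rscr\vee\Xscr_s)=\Rscr\vee\bigcap_{s>t}\Xscr_s$: this is not valid for arbitrary $\sigma$-fields, but holds here because on the bounded time horizon the $\sigma$-field $\Xscr_s$ is, modulo null sets, generated by finitely many jump times, making the increasing family $(\Xscr_s)_{s>t}$ ``thin'' enough that a monotone-class / approximation argument (conditioning and using right-continuity of $X$) pushes through. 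I would spell this out via the observation that any $A\in\bigcap_{s>t}(\Rscr\vee\Xscr_s)$ can be approximated by sets of the form $R\cap B$ with $R\in\Rscr$, $B\in\Xscr_{s}^0$, and then let $s\downarrow t$.

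For part (ii), the PRP of $\ol X^{\Fbb}=X-X^{p,\Fbb}$, I would argue as follows. Since $X$ is a point process, its jump measure $\mu^X$ is an integer-valued random measure carrying a single mark $x=1$ at each jump, hence $\mu^X$ is essentially determined by $X$ itself and $\ol X^{\Fbb}=1*(\mu^X-\nu^X)$ where $\nu^X=(\mu^X)^{p,\Fbb}$ is the $\Fbb$-compensator. The general theory of marked point processes (Jacod's theorem; see \cite[Ch.\ III]{JS00} or \cite{HWY92}) gives that in the natural filtration of a multivariate point process every local martingale has the representation $W*(\mu-\nu)$ for a suitable predictable function $W$; since here the mark space is the single point $\{1\}$, such a $W$ reduces to an $\Fbb$-predictable \emph{process} $K$, and $W*(\mu^X-\nu^X)=K\cdot\ol X^{\Fbb}$. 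The only twist relative to the textbook statement is the presence of $\Rscr$: I would handle it by noting that $\Fbb$ is the natural filtration of the $E\cup\{\text{extra mark}\}$-valued point process obtained by recording, in addition to the jumps of $X$, the ``time-$0$ jump'' carrying the $\Rscr$-measurable information — i.e.\ one enlarges the mark space at time $0$; alternatively, and more cleanly, one conditions on $\Rscr$: for $\Pbb$-a.e.\ realization of the $\Rscr$-data the filtration $\Fbb$ restricted to $(t>0)$ behaves like the natural point-process filtration, and since $\ol X^{\Fbb}_0=0$ and $X$ has no jump at $0$, the representation $Y=Y_0+K\cdot\ol X^{\Fbb}$ holds with $Y_0$ being $\Fscr_0=\Rscr$-measurable. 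I expect this last point — making rigorous that the initial $\sigma$-field contributes only to $Y_0$ and does not destroy the predictable representation on $(0,\infty)$ — to be the main obstacle in (ii); the standard way to close it is to verify that $\ol X^{\Fbb}$ has the PRP by checking the equivalent ``extremality'' or ``chaotic-representation-type'' criterion, namely that any $\Fbb$-martingale orthogonal to $\ol X^{\Fbb}$ and null at $0$ vanishes, which follows because the jumps of such a martingale are carried by $\mu^X$ and the $\Rscr$-part is frozen at time $0$.

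In summary: (i) is an interchange-of-limits statement about $\sigma$-fields that works because of the discrete jump structure of $X$ on bounded intervals; (ii) is Jacod's martingale representation theorem for point processes, adapted to a one-point mark space and a nontrivial initial $\sigma$-field, the latter absorbed into the constant $Y_0$.
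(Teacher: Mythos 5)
Your proposal is correct and follows essentially the same route as the paper, which simply cites \cite[Proposition 3.39 a)]{J79} for (i) and \cite[Theorem III.4.37]{JS00} for (ii); your two sketches are in effect reconstructions of the proofs of those cited results. Note, however, that the cited representation theorem is already stated for the filtration generated by a marked point process \emph{together with} a nontrivial initial $\sigma$-field, so the extra machinery you devote to absorbing $\Rscr$ into $Y_0$ (conditioning on $\Rscr$, enlarging the mark space at time $0$, or the extremality criterion) is unnecessary.
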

\begin{proof} Point (i) follows by \cite[Proposition 3.39 (a)]{J79}, while (ii) is \cite[Eq.\ (III.4.38)]{JS00}.\end{proof}

We consider a point process $H$ and introduce $\Hbb=(\Hscr_t)_{t\geq0}$ by $\Hscr_t:=\bigcap_{s>t}\sig(H_u,\ 0\leq u\leq s)$, $t\geq0$. We denote by $\Gbb=(\Gscr_t)_{t\geq0}$ the \emph{progressive enlargement} of $\Fbb$ by $\Hbb$: That is, $\Gbb$ is defined by
\[
 \Gscr_t:=\bigcap_{s>t} \Fscr_{s}\vee\Hscr_{s} \quad t\geq0.
\]
In other words, $\Gbb$ is the \emph{smallest} right-continuous filtration containing $\Fbb$ and $\Hbb$.

 As a special example of $H$, one can take the default process associated with a random time $\tau$, i.e., $H_t(\om):=1_{\lsi \tau,+\infty\lsi}(\om,t)$, where $\tau$ is a $(0,+\infty]$-valued random variable. In this case, $\Gbb$ is called the progressive enlargement of $\Fbb$ by $\tau$ and it is the smallest right-continuous filtration containing $\Fbb$ and such that $\tau$ is a $\Gbb$-stopping time.

We are now in the following situation: The local martingale $\overline X\p\Fbb$ has the PRP with respect to $\Fbb$ and the local martingale $\overline H\p\Hbb$ has the PRP with respect to $\Hbb$. Our aim is to investigate how to represent martingales of the filtration $\Gbb$.

 The proof of the next proposition is postponed to the appendix.
\begin{proposition}\label{prop:poi.pr.pb}
Let $X$ and $H$ be two point processes with respect to $\Fbb$ and $\Hbb$, respectively. Then, the processes $X-[X,H]$, $H-[X,H]$ and $[X,H]$ are point processes with respect to $\Gbb$. Furthermore, they have pairwise no common jumps.
\end{proposition}

We introduce the $\Rbb\p2$-valued $\Gbb$-semimartingale $\wt X=(X,H)\p\top$. The jump measure $\mu\p{\wt X}$ of $\wt X$ is an integer-valued random measure on $\Rbb_+\times E$, where $E:=\{(1,0);(0,1);(1,1)\}$. For a $\Gbb$-predictable function $W$ on $\wt\Om:=\Om\times\Rbb_+\times E$ we define the $\Gbb$-predictable processes
$W(1,0)$, $W(0,1)$ and $W(1,1)$ by $W_t(0,1):=W(t,0,1)$, $W_t(1,0):=W(t,1,0)$,  $W_t(1,1):=W(t,1,1)$, $t\geq0$.

\begin{proposition}\label{prop:mar.poi.pr} Let us consider the $\Rbb\p2$-valued $\Gbb$-semimartingale $\wt X=(X,H)\p\top$. We then have:

(i) The jump measure $\mu\p{\wt X}$ of $\wt X$ on $\Rbb_+\times E$, where $E:=\{(1,0);(0,1);(1,1)\}$, is given by
\begin{equation}\label{eq:ju.mea.spp}
\begin{split}
\mu\p{\wt X}(\rmd t,\rmd x_1,\rmd x_2)=\rmd (X_t-&[X,H]_t)\delta_{(1,0)}(\rmd x_1,\rmd x_2)\\&+\rmd (H_t-[X,H]_t)\delta_{(0,1)}(\rmd x_1,\rmd x_2)+\rmd [X,H]_t\delta_{(1,1)}(\rmd x_1,\rmd x_2).
\end{split}
\end{equation}
Furthermore, $\mu\p{\wt X}$ is an $\Rbb\p2$-valued marked point process with respect to $\Gbb$.

(ii) The $\Gbb$-predictable compensator $\nu\p{\wt X}$ of $\mu\p{\wt X}$ is given by
\begin{equation}\label{eq:ju.mea.spp.com}
\begin{split}
\nu\p{\wt X}(\rmd t,\rmd x_1,\rmd x_2)=\rmd(X_t-&[X,H]_t)\p{p,\Gbb}\delta_{(1,0)}(\rmd x_1,\rmd x_2)\\&+\rmd(H_t-[X,H]_t)\p{p,\Gbb}\delta_{(0,1)}(\rmd x_1,\rmd x_2)+\rmd [X,H]_t\p{p,\Gbb}\delta_{(1,1)}(\rmd x_1,\rmd x_2).
\end{split}
\end{equation}
\end{proposition}
\begin{proof} We start verifying (i).
By the definition of the jump measure of $\wt X$, we get:
\[
\begin{split}
\mu\p{\wt X}(\om,\rmd t,\rmd x_1,\rmd x_2)&= \sum_{s>0}\Delta X_s(\om)(1- \Delta H_s(\om))\delta_{(s,1,0)}(\rmd t,\rmd x_1,\rmd x_2)
\\&+\sum_{s>0}\Delta H_s(\om)(1- \Delta X_s(\om))\delta_{(s,0,1)}(\rmd t,\rmd x_1,\rmd x_2)
\\&+\sum_{s>0}\Delta X_s(\om)\Delta H_s(\om)\delta_{(s,1,1)}(\rmd t,\rmd x_1,\rmd x_2),
\end{split}
\]
which is \eqref{eq:ju.mea.spp}. That is,
$
\mu\p{\wt X}(\om,[0,t]\times\Rbb\p2)=X_t(\om)-[X,H]_t(\om)+H_t(\om)<+\infty$,
$\om\in\Om$, $t\geq0$. Thus, $\mu\p{\wt X}$ is an $\Rbb\p 2$-valued marked point process with respect to $\Gbb$. We now come to (ii). Let us denote by $\nu$ the $\Gbb$-predictable random measure on the right-hand side of \eqref{eq:ju.mea.spp.com}. We have to show that $\nu$ coincides with $\nu\p{\wt X}$. So, let $W$ be a $\Gbb$-predictable function such that $|W|\ast\mu\p{\wt X}\in\Ascr\p+_\mathrm{loc}(\Gbb)$. By (i) we get
\begin{equation}\label{eq:Wastmu}
|W|\ast\mu\p{\wt X}=|W(1,0)|\cdot(X-[X,H])+|W(0,1)|\cdot(H-[X,H])+|W(1,1)|\cdot[X,H].
\end{equation}
We now denote by $Y\p 1$, $Y\p2$ and $Y\p 3$ the first, the second and the third term on the right-hand side of \eqref{eq:Wastmu}, respectively. By Proposition \ref{prop:poi.pr.pb}, $X-[X,H]$, $H-[X,H]$ and $[X,H]$ being point processes, we deduce from \eqref{eq:Wastmu} that $Y\p i\in\Ascr\p+_\mathrm{loc}(\Gbb)$, $i=1,2,3$, since $|W|\ast\mu\p{\wt X}\in\Ascr\p+_\mathrm{loc}(\Gbb)$. Then, the $\Gbb$-dual predictable projection $(Y\p i)\p{p,\Gbb}$ of $Y\p i$, $i=1,2,3$, exists and it belongs to $\Ascr\p+_\mathrm{loc}(\Gbb)$. Since $W(1,0)$, $W(0,1)$ and $W(1,1)$ are $\Gbb$-predictable processes, the properties of the $\Gbb$-dual predictable projection yield the identities $
(Y\p 1)\p{p,\Gbb}=|W(1,0)|\cdot(X-[X,H])\p{p,\Gbb}$, $(Y\p 2)\p{p,\Gbb}=|W(0,1)|\cdot(H-[X,H])\p{p,\Gbb}$ and finally $(Y\p 3)\p{p,\Gbb}=|W(1,1)|\cdot[X,H]\p{p,\Gbb}$. Thus, $|W|\ast\nu=(Y\p 1)\p{p,\Gbb}+(Y\p 2)\p{p,\Gbb}+(Y\p 3)\p{p,\Gbb}$ because of the definition of $\nu$. So, $|W|\ast\nu\in\Ascr\p+_\mathrm{loc}(\Gbb)$ holds, since $(Y\p i)\p{p,\Gbb}\in\Ascr\p+_\mathrm{loc}(\Gbb)$, $i=1,2,3$. We now show that $W\ast\mu\p{\wt X}-W\ast\nu\in\Mloc(\Gbb)$: The linearity of the integral with respect to the \emph{integrator} and (i)  yield
 \[
\begin{split}
W\ast\mu\p{\wt X}-W\ast\nu&=W(1,0)\cdot((X-[X,H])-(X-[X,H])\p{p,\Gbb})\\&+W(0,1)\cdot((H-[X,H])-(H-[X,H])\p{p,\Gbb})\\&+W(1,1)\cdot([X,H]-[X,H]\p{p,\Gbb})\in\Mloc(\Gbb).
\end{split}
\]
Hence, $\nu\p{\wt X}=\nu$ and the proof is complete.
\end{proof}
We now denote by $\wt\Gbb=(\wt\Gscr_t)_{t\geq0}$ the smallest right-continuous filtration such that $\mu\p{\wt X}$ is optional and introduce $\Gbb\p\ast=(\Gscr\p\ast_t)_{t\geq0}$ by $\Gscr\p\ast_t:=\Rscr\vee\wt\Gscr_t$.  From \cite[Proposition 3.39 (a)]{J79},  $\Gbb\p\ast$ is \emph{right-continuous} and, hence, it is the smallest right-continuous filtration satisfying $\Rscr\subseteq\Gscr_0$ and such that $\mu\p{\wt X}$ is an optional random measure.
The proof of the following lemma is postponed to the appendix.
\begin{lemma}\label{lem:fil.eq} The filtration $\Gbb\p\ast$ coincides with $\Gbb$.
\end{lemma}

As announced in the introduction, we give two kinds of representations of martingales: The first one with respect to a compensated random measure, the second one with respect to the following three locally bounded $\Gbb$-local martingales:
\begin{equation}\label{eq:mart.Z1.Z2.Z3}
Z\p1:=\ol{X-[X,H]}\p{\,\Gbb},\quad Z\p2:=\ol{H-[X,H]}\p{\,\Gbb},\quad Z\p3:=\ol{[X,H]}\p{\,\Gbb},
\end{equation}
where, we recall, for a point process $Z$, we define $\ol Z\p\Gbb:=Z-Z\p{p,\Gbb}$.
\begin{theorem}\label{thm:wrp.srp}
\textnormal{(i)} Let $Y\in\Mloc(\Gbb)$. Then, there exists a $\Gbb$-predictable function $W$ such that $|W|\ast\mu\p{\wt X}$ belongs to $\Ascr\p+_\mathrm{loc}(\Gbb)$ and
\begin{equation}\label{eq:wrp} Y=Y_0+W\ast\mu\p{\wt X}-W\ast\nu\p{\wt X}.
\end{equation}
\textnormal{(ii)}  Let $Y\in\Mloc(\Gbb)$. Then $Y$ has the following representation:
\begin{equation}\label{eq:prp.spp}
Y=Y_0+K\p1\cdot Z\p1+K\p2\cdot Z\p2+K\p3\cdot Z\p3,\quad K\p i\in\Lrm\p1_\mathrm{loc}(Z\p i,\Gbb),\quad i=1,2,3.
\end{equation}
\end{theorem}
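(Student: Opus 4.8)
The plan is to deduce part~(i) from the martingale representation theorem for marked point processes, and then to obtain part~(ii) from part~(i) by splitting the $\Gbb$-predictable integrand $W$ over the three-point mark space $E=\{(1,0);(0,1);(1,1)\}$: this reorganizes the single stochastic integral with respect to $\mu\p{\wt X}-\nu\p{\wt X}$ into a sum of three stochastic integrals with respect to $Z\p1$, $Z\p2$ and $Z\p3$.

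\emph{Part (i).} By Lemma~\ref{lem:fil.eq}(iii), $\Gbb$ coincides with $\Gbb\p\ast=\Rscr\vee\wt\Gbb$; that is, $\Gbb$ is the initial enlargement by the $\sig$-field $\Rscr$ of the filtration $\wt\Gbb$ generated by the random measure $\mu\p{\wt X}$. Moreover, by Theorem~\ref{thm:mar.poi.pr}(iii), $\mu\p{\wt X}$ is an $\Rbb\p2$-valued marked point process with respect to $\Gbb$, and by Theorem~\ref{thm:mar.poi.pr}(ii) its $\Gbb$-predictable compensator is $\nu\p{\wt X}$. Hence we are exactly in the setting of the martingale representation theorem for multivariate point processes, \cite[Theorem III.4.37]{JS00} --- the same result invoked for the point process filtration $\Fbb$ in Lemma~\ref{lem:prp.pp}(ii) --- now applied to $\mu\p{\wt X}$ in the filtration $\Gbb$ with initial $\sig$-field $\Rscr$. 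This yields, for every $Y\in\Mloc(\Gbb)$, a $\Gbb$-predictable function $W$ with $|W|\ast\mu\p{\wt X}\in\Ascr\p+_\mathrm{loc}(\Gbb)$ such that $Y=Y_0+W\ast\mu\p{\wt X}-W\ast\nu\p{\wt X}$, which is \eqref{eq:wrp}. The only point requiring attention is that the right-continuous filtration $\Gbb=\Gbb\p\ast$ has the form demanded by \cite[Theorem III.4.37]{JS00}, which is precisely the content of Lemma~\ref{lem:fil.eq} together with \cite[Proposition 3.39 a)]{J79}.

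\emph{Part (ii).} Start from \eqref{eq:wrp}. Since $\mu\p{\wt X}$ is carried by $\Rbb_+\times E$ and $E=\{(1,0);(0,1);(1,1)\}$ has only three points, the $\Gbb$-predictable function $W$ is entirely encoded by the three $\Gbb$-predictable processes $W(1,0)$, $W(0,1)$, $W(1,1)$. Repeating the computations \eqref{eq:Wastmu}--\eqref{eq:Wastnu} of the proof of Theorem~\ref{thm:mar.poi.pr}(ii) --- which are legitimate because all the integrals involved are finite-valued, $|W|\ast\mu\p{\wt X}$ lying in $\Ascr\p+_\mathrm{loc}(\Gbb)$ --- and subtracting, the linearity of the integral with respect to the integrator yields
\[
W\ast\mu\p{\wt X}-W\ast\nu\p{\wt X}=W(1,0)\cdot Z\p1+W(0,1)\cdot Z\p2+W(1,1)\cdot Z\p3,
\]
with $Z\p1,Z\p2,Z\p3$ as in \eqref{eq:mart.Z1}--\eqref{eq:mart.Z3}. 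Setting $K\p1:=W(1,0)$, $K\p2:=W(0,1)$, $K\p3:=W(1,1)$ we obtain \eqref{eq:prp.spp}, provided we check $K\p i\in\Lrm\p1_\mathrm{loc}(Z\p i,\Gbb)$. Write $N\p1:=X-[X,H]$, $N\p2:=H-[X,H]$, $N\p3:=[X,H]$, which by Proposition~\ref{lem:poi.pr.pb} are $\Gbb$-point processes with $Z\p i=N\p i-(N\p i)\p{p,\Gbb}$. By \eqref{eq:Wastmu}, $|W|\ast\mu\p{\wt X}=|K\p1|\cdot N\p1+|K\p2|\cdot N\p2+|K\p3|\cdot N\p3$, so each summand, being dominated by $|W|\ast\mu\p{\wt X}\in\Ascr\p+_\mathrm{loc}(\Gbb)$, belongs to $\Ascr\p+_\mathrm{loc}(\Gbb)$; hence $|K\p i|\cdot N\p i\in\Ascr\p+_\mathrm{loc}(\Gbb)$, the Lebesgue--Stieltjes integrals $K\p i\cdot N\p i$ and $K\p i\cdot(N\p i)\p{p,\Gbb}$ exist and are finite, and $K\p i\cdot N\p i-K\p i\cdot(N\p i)\p{p,\Gbb}=K\p i\cdot N\p i-(K\p i\cdot N\p i)\p{p,\Gbb}\in\Mloc(\Gbb)$. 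Since $Z\p i$ is locally bounded (as a compensated point process), one checks that this finite-variation local martingale is precisely $K\p i\cdot Z\p i$ and that $K\p i\in\Lrm\p1_\mathrm{loc}(Z\p i,\Gbb)$, as required.

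\emph{Main obstacle.} The delicate step is this last one: showing that the single stochastic integral $W\ast(\mu\p{\wt X}-\nu\p{\wt X})$ genuinely decouples into three \emph{separate} stochastic integrals, i.e.\ that each $K\p i$ lies in $\Lrm\p1_\mathrm{loc}(Z\p i,\Gbb)$ in its own right. This rests on two facts already at our disposal --- the local integrability $|W|\ast\mu\p{\wt X}\in\Ascr\p+_\mathrm{loc}(\Gbb)$ coming out of part~(i), and Proposition~\ref{lem:poi.pr.pb}, which guarantees that $N\p1,N\p2,N\p3$ are $\Gbb$-point processes (hence $Z\p1,Z\p2,Z\p3$ are locally bounded) with pairwise no common jumps --- together with the identification of the stochastic integral of a $\Gbb$-predictable process with respect to a finite-variation $\Gbb$-local martingale with the corresponding compensated Lebesgue--Stieltjes integral. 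Everything else reduces to bookkeeping already carried out in the proof of Theorem~\ref{thm:mar.poi.pr}.
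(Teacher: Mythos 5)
Your proposal is correct and follows essentially the same route as the paper: part (i) via Lemma \ref{lem:fil.eq}(iii), Theorem \ref{thm:mar.poi.pr}(iii) and \cite[Theorem III.4.37]{JS00}, and part (ii) by splitting $W$ over the three marks and verifying $K\p i\in\Lrm\p1_\mathrm{loc}(Z\p i,\Gbb)$ through the domination of $|K\p i|\cdot N\p i$ and $|K\p i|\cdot(N\p i)\p{p,\Gbb}$ by locally integrable processes. The only cosmetic difference is that the paper packages your final ``one checks'' step as the explicit estimate $\sum_{0\leq s\leq\cdot}|K\p i_s\Delta Z\p i_s|\leq|W\p{j,k}|\ast\mu\p{\wt X}+|W\p{j,k}|\ast\nu\p{\wt X}\in\Ascr\p+_\mathrm{loc}(\Gbb)$ followed by a citation of \cite[Theorem 9.5.1]{HWY92}, which is exactly the fact you invoke informally.
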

\begin{proof}
We first verify (i). Because of Proposition \ref{prop:mar.poi.pr} (i), the random measure $\mu\p{\wt X}$ is a marked point process with respect to $\Gbb$. Furthermore, by Lemma \ref{lem:fil.eq}, the filtration $\Gbb$ coincides with $\Gbb\p\ast$. Hence,  \eqref{eq:wrp} follows by \cite[Theorem III.4.37]{JS00}.

We now come to (ii). Let $Y\in\Mloc(\Gbb)$. By (i), there exists a $\Gbb$-predictable function $W$ such that $|W|\ast\mu\p{\wt X}\in\Ascr\p+_\mathrm{loc}(\Gbb)$ and $Y=Y_0+W\ast\mu\p{\wt X}-W\ast\nu\p{\wt X}$. Since $|W|\ast\mu\p{\wt X}\in\Ascr\p+_\mathrm{loc}(\Gbb)$, the $\Gbb$-predictable functions $W\p{j,k}(\om,t,x_1,x_2):=W(\om,t,x_1,x_2)1_{\{x_1=j,x_2=k\}}$, $(j,k)\in E=\{(1,0);(0,1);(1,1)\}$, satisfy $|W\p{j,k}|\ast\mu\p{\wt X}\in\Ascr\p+_\mathrm{loc}(\Gbb)$. Therefore, we obtain $\big(W\p{j,k}\ast\mu\p{\wt X}-W\p{j,k}\ast\nu\p{\wt X}\big)\in\Mloc(\Gbb)$. From \eqref{eq:ju.mea.spp} and \eqref{eq:ju.mea.spp.com}, the relations
\begin{gather}
W\p{1,0}\ast\mu\p{\wt X}-W\p{1,0}\ast\nu\p{\wt X}=W(1,0)\cdot Z\p1\label{int1},\\  W\p{0,1}\ast\mu\p{\wt X}-W\p{0,1}\ast\nu\p{\wt X}=W(0,1)\cdot Z\p2,\label{int2}\\  W\p{1,1}\ast\mu\p{\wt X}-W\p{1,1}\ast\nu\p{\wt X}=W(1,1)\cdot Z\p3\label{int3}
\end{gather}
hold, where the integrals on the right-hand side in these identities are Lebesgue--Stieltjes integrals. So from \eqref{eq:wrp}, Proposition \ref{prop:mar.poi.pr} and \eqref{int1}, \eqref{int2}, \eqref{int3}, we get
\[\begin{split}
Y&=Y_0+ W\ast\mu\p{\wt X}-W\ast\nu\p{\wt X}=Y_0+W(1,0)\cdot Z\p1+W(0,1)\cdot Z\p2+W(1,1)\cdot Z\p3,
\end{split}\]
the latter term being a sum of Lebesgue--Stieltjes integrals. For shortness, we denote $K\p1:=W(1,0)$, $K\p2:=W(0,1)$ and $K\p3:=W(1,1)$.  It remains to show that $K\p i\in\Lrm\p1_\mathrm{loc}(Z\p i,\Gbb)$, $i=1,2,3$. By Proposition \ref{prop:mar.poi.pr} and \eqref{eq:mart.Z1.Z2.Z3}, the estimate
$\sum_{0\leq s\leq\cdot}|K\p 1_s\Delta Z\p 1_s|\leq|W\p{1,0}|\ast\mu\p{\wt X}+|W\p{1,0}|\ast\nu\p{\wt X}\in\Ascr\p+_\mathrm{loc}(\Gbb)$ holds. Similarly, we get  $\sum_{0\leq s\leq\cdot}|K\p 2_s\Delta Z\p 2_s|,\ \sum_{0\leq s\leq\cdot}|K\p 3_s\Delta Z\p 3_s|\in\Ascr\p+_\mathrm{loc}(\Gbb)$. Hence, \cite[Theorem 9.5 (1)]{HWY92}, yields $K\p i\in\Lrm\p1_\mathrm{loc}(Z\p i,\Gbb)$, $i=1,2,3$.  The proof of the theorem is complete.
\end{proof}
Theorem \ref{thm:wrp.srp} shows that, without requiring any further condition on the point processes $X$ and $H$,  the martingale representation property of the process $\ol X\p\Fbb$ in $\Fbb$ and of $\ol H\p\Hbb$ in $\Hbb$ propagates to $\Gbb$ both with respect to the compensated random measure $\mu\p{\wt X}-\nu\p{\wt X}$ and with respect to the three local martingales $Z\p1$, $Z\p2$ and $Z\p3$.

\paragraph*{Comparison with the existing literature.} We now review some results from the literature which are generalized by Theorem \ref{thm:wrp.srp}, at least in the special case of point processes.

Before, we recall that two local martingales  $M$ and $N$ with respect to a joint filtration, say $\Abb$, are called \emph{orthogonal} if $MN\in\Hscr\p1_{\mathrm{loc}}(\Abb)$ (or, equivalently, if $[M,N]\in\Hscr\p1_{\mathrm{loc}}(\Abb)$). If $M,N\in\Hscr\p2_{\mathrm{loc}}(\Abb)$, then they are orthogonal if and only if $\aPP{M}{N}=0$, where $\aPP{M}{N}$ denotes the predictable co-variation with respect to $\Abb$.
\begin{itemize}
\item For a process $Y$ we denote by $\Fbb\p Y$ the smallest right-continuous filtration such that $Y$ is adapted. In Calzolari and Torti \cite{CT16}, it is assumed that $M\in\Hscr\p2(\Fbb\p M)$ and $N\in\Hscr\p2(\Fbb\p N)$ possess the PRP with respect to $\Fbb\p M$ and $\Fbb\p N$, respectively and that $\Fscr\p M_0$ and $\Fscr\p N_0$ are trivial. Moreover, the authors assume the following conditions: (1) $M,N\in\Hscr\p2(\Gbb)$, $\Gbb:=\Fbb\p M\vee\Fbb\p M$, and (2) $M$ and $N$ are orthogonal in $\Gbb$, that is, $[M,N]\in\Mloc(\Gbb)$. Under these assumptions it is shown in \cite[Theorem 4.5]{CT16} that $\Fbb\p M$ and $\Fbb\p N$ are independent; that $M,N,[M,N]\in\Hscr\p2(\Gbb)$ are pairwise orthogonal and that every $Y\in\Hscr\p2(\Gbb)$ can be represented as
\[
Y=Y_0+K\p1\cdot M+K\p2\cdot N+K\p3\cdot [M,N], \quad K\p1\in\Lrm\p2(M,\Gbb),\  K\p2\in\Lrm\p2(N,\Gbb),\  K\p3\in\Lrm\p2([M,N],\Gbb).
\]
In the setting of point processes, Theorem \ref{thm:wrp.srp} generalizes \cite[Theorem 4.5]{CT16} since we do not make the very strong assumptions that $\ol X\p{\Fbb}$ and $\ol H\p{\Hbb}$ remain $\Gbb$-local martingales and that they are orthogonal in $\Gbb$. Moreover, we do not require the triviality of $\Fscr_0$ and, nevertheless, we show that the PRP propagates to $\Gbb$.

\item In  Di Tella \cite[Theorem 5.3]{DT19}  an  $\Rbb\p d$-dimensional semimartingale $X$ possessing the WRP with respect to a filtration $\Fbb$ is considered. The filtration $\Fbb$ is enlarged progressively to $\Gbb$ by a random time $\tau$ avoiding $\Fbb$ stopping times and such that $\Fbb$ is immersed in $\Gbb$. Then it is shown that the semimartingale $(X,H)$ possesses the WRP with respect to $\Gbb$, where $H=1_{\lsi\tau,+\infty\lsi}$ (see \cite[Definition 3.1]{DT19}). Theorem \ref{thm:wrp.srp} above generalizes \cite[Theorem 5.3]{DT19} because, in the special case of a point process $X$ and $H=1_{\lsi\tau,+\infty\lsi}$, it shows that the weak representation property propagates to $\Gbb$, without any further assumption on $\tau$.

\item In \cite{AJR18}, Aksamit, Jeanblanc and Rutkowski considered the case of a Poisson process $X$ with respect to $\Xbb$ and enlarged $\Xbb$ progressively by a random time $\tau$. They obtained a predictable representation result for a particular class of $\Gbb$-martingales stopped at $\tau$. Theorem \ref{thm:wrp.srp} generalizes the results of \cite{AJR18} because  we obtain a predictable representation result which is valid for every $\Gbb$-local martingale and not only for a special class of $\Gbb$-local martingales stopped at $\tau$: In other words, our martingale representation theorem goes \textit{beyond} $\tau$. Therefore, we get as a by-product a martingale representation theorem for \emph{all} $\Gbb$-martingales stopped at $\tau$ (see \S\ref{subs:enl.rt}) and not only for a particular class. We also stress that the proofs exhibited in the present paper are more elementary and direct than those given in \cite{AJR18}.

\item In \cite{X93}, Xue has shown that the weak representation property of a \emph{quasi-left continuous}\footnote{ An $\Fbb$-adapted \cadlag process $X$ is called \emph{quasi-left continuous} if $\Delta X_\sig=0$ for every finite-valued and $\Fbb$-predictable stopping time $\sig$.
} semimartingale $X$ with respect to a filtration $\Fbb$ propagates to the filtration $\Gbb=\Fbb\vee\Hbb$, where $\Hbb$ is a filtration independent of $\Fbb$ and furthermore $\Hbb$ supports a quasi-left continuous semimartingale $Y$ possessing the weak representation property with respect to $\Hbb$. If $X$ and $H$ are point processes, Theorem \ref{thm:wrp.srp} generalizes the work of Xue because we require neither the independence of $\Fbb$ and $\Hbb$ nor the quasi-left continuity of $X$ and $H$.
\end{itemize}

In the remaining part of this section we are going to study the \emph{stable subspaces} generated by the local martingales $Z\p1$, $Z\p2$ and $Z\p3$ in $\Hscr\p p_0(\Gbb)$, for $p\geq1$. We now shortly recall the definition of the stable subspace \emph{generated} by a family of local martingales of locally integrable variation, which is sufficient in this context. The main reference about stable subspaces is \cite[Chapter IV]{J79}.

 For $Y\in\Hscr\p p(\Gbb)$, $p\geq1$, we set $\|Y\|_{\Hscr\p p}:=\Ebb[\sup_{t\geq0}|Y_t|\p p]\p{1/p}$. For $p=2$, we also introduce the equivalent norm $\|Y\|_2:=\Ebb[Y\p2_\infty]\p{1/2}$ and $(\Hscr\p 2,\|\cdot\|_{2})$ is a Hilbert space.

Let $\Hscr$ be a linear subspace of $(\Hscr\p p_0(\Gbb), \|\cdot\|_{\Hscr\p p})$. Then $\Hscr$ is called \emph{a stable subspace} of $\Hscr\p p_0(\Gbb)$ if $\Hscr$ is closed in $(\Hscr\p p_0(\Gbb), \|\cdot\|_{\Hscr\p p})$ and \emph{stable under stopping}, that is, for every $\Gbb$-stopping time $\sig$, if $Z\in\Hscr$, then $Z\p\sig\in\Hscr$. For a family $\Zscr\subseteq\Hscr\p p_{\mathrm{loc},0}(\Gbb)\cap\Ascr_\mathrm{loc}(\Gbb)$, the stable subspace generated by $\Zscr$ in $\Hscr\p p_0(\Gbb)$ is denoted by $\Lscr\p p_\Gbb(\Zscr)$ and it is defined as the closure in $(\Hscr\p p_0(\Gbb),\|\cdot\|_{\Hscr\p p})$ of the set $\{K\cdot Z,\ Z\in\Zscr,\ K\in\Lrm\p p(Z,\Gbb)\}$. Notice that $\Lscr\p p_\Gbb(\Zscr)$ is characterized as following: It is the smallest stable subspace of $\Hscr\p p_0(\Gbb)$ containing the set $\Sscr:=\{Z\p\sig,\ \sig\ \Gbb\textnormal{-stopping time}: Z\p\sig\in\Hscr\p p(\Gbb)\}$ (see also \cite[Definition 4.4]{J79} and the subsequent comment therein).

\begin{corollary}[to Theorem \ref{thm:wrp.srp}]\label{cor:st.ssp.Z}
Let $p\geq1$ and let $\Zscr:=\{Z\p1,Z\p2,Z\p3\}$.

\textnormal{(i)} The identity $\Lscr\p p_\Gbb(\Zscr)=\Hscr\p p_0(\Gbb)$ holds.

\textnormal{(ii)} If furthermore $p=2$ and the local martingales $Z\p1, Z\p2, Z\p3\in\Hscr\p2_{\mathrm{loc},0}(\Gbb)$ are pairwise orthogonal, then every $Y\in\Hscr\p2(\Gbb)$ can be represented as
\[
Y=Y_0+K\p1\cdot Z\p1+K\p2\cdot Z\p2+K\p3\cdot Z\p3,\quad K\p i\in\Lrm\p2(Z\p i,\Gbb),\quad i=1,2,3,
\]
and this is an orthogonal decomposition of $Y$ in $(\Hscr\p2(\Gbb), \|\cdot\|_{2})$.
\end{corollary}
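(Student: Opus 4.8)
The plan is to read part~(i) as the restatement of Theorem~\ref{thm:wrp.srp}(ii) in the language of stable subspaces, and then to derive part~(ii) from (i) together with the Hilbert-space geometry of $(\Hscr\p2_0(\Gbb),\|\cdot\|_{2})$ supplied by the orthogonality hypothesis.

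For (i), the inclusion $\Lscr\p p_\Gbb(\Zscr)\subseteq\Hscr\p p_0(\Gbb)$ holds by definition, so it remains to prove $\Hscr\p p_0(\Gbb)\subseteq\Lscr\p p_\Gbb(\Zscr)$. Given $Y\in\Hscr\p p_0(\Gbb)\subseteq\Mloc(\Gbb)$, Theorem~\ref{thm:wrp.srp}(ii) gives $Y=K\p1\cdot Z\p1+K\p2\cdot Z\p2+K\p3\cdot Z\p3$ with $K\p i\in\Lrm\p1_\mathrm{loc}(Z\p i,\Gbb)$. This says precisely that the family $\Zscr=\{Z\p1,Z\p2,Z\p3\}$ of locally bounded local martingales has the predictable representation property in $\Gbb$, which by the theory of stable subspaces (see \cite[Chapter IV]{J79}) is equivalent to $\Lscr\p p_\Gbb(\Zscr)=\Hscr\p p_0(\Gbb)$ for every $p\geq1$. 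A hands-on version of this step would localize with $\sig_n\uparrow+\infty$ along which each $(Z\p i)\p{\sig_n}$ is bounded and each stopped integral lies in a stable subspace generated by $(Z\p i)\p{\sig_n}$ — using here that $\sum_{0\leq s\leq\cdot}|K\p i_s\Delta Z\p i_s|\in\Ascr\p+_\mathrm{loc}(\Gbb)$, which was established inside the proof of Theorem~\ref{thm:wrp.srp}(ii) — and then pass to the $\|\cdot\|_{\Hscr\p p}$-limit via $Y\p{\sig_n}\to Y$ (dominated by $2\sup_{t\geq0}|Y_t|\in L\p p$) and the closedness of $\Lscr\p p_\Gbb(\Zscr)$. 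For $p=2$ there is also the cleaner route of showing $\Lscr\p2_\Gbb(\Zscr)\p\perp=\{0\}$: if $N\in\Hscr\p2_0(\Gbb)$ is orthogonal to $\Lscr\p2_\Gbb(\Zscr)$, testing $N$ against $K\cdot(Z\p i)\p{\sig_n}$ and applying the Kunita--Watanabe inequality forces $\aPP{N}{Z\p i}=0$ for $i=1,2,3$, whence applying Theorem~\ref{thm:wrp.srp}(ii) to $N$ yields $\aP{N}=0$, so $N=0$.

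For (ii), I would write $Y=Y_0+(Y-Y_0)$ with $Y-Y_0\in\Hscr\p2_0(\Gbb)=\Lscr\p2_\Gbb(\Zscr)$ by~(i). Since $Z\p1,Z\p2,Z\p3$ are pairwise orthogonal and null at $0$, $\aPP{Z\p i}{Z\p j}=0$ for $i\neq j$, hence $\aPP{K\cdot Z\p i}{K'\cdot Z\p j}=(KK')\cdot\aPP{Z\p i}{Z\p j}=0$ for all $K\in\Lrm\p2(Z\p i,\Gbb)$ and $K'\in\Lrm\p2(Z\p j,\Gbb)$; thus the closed stable subspaces $\Lscr\p2(Z\p i):=\{K\cdot Z\p i:K\in\Lrm\p2(Z\p i,\Gbb)\}$ are pairwise orthogonal in $(\Hscr\p2_0(\Gbb),\|\cdot\|_{2})$, so their algebraic sum is closed, and being a stable subspace of $\Hscr\p2_0(\Gbb)$ containing every stopped martingale $(Z\p i)\p\sig\in\Hscr\p2(\Gbb)$ it coincides with $\Lscr\p2_\Gbb(\Zscr)$. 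Therefore $Y-Y_0=K\p1\cdot Z\p1+K\p2\cdot Z\p2+K\p3\cdot Z\p3$ with $K\p i\cdot Z\p i\in\Lscr\p2(Z\p i)$, i.e.\ $K\p i\in\Lrm\p2(Z\p i,\Gbb)$; orthogonality gives $\|Y-Y_0\|_{2}\p2=\sum_{i=1}\p3\|K\p i\cdot Z\p i\|_{2}\p2$, and since $\Ebb[Y_0N_\infty]=\Ebb[Y_0N_0]=0$ for every $N\in\Hscr\p2_0(\Gbb)$ one also has $Y_0\perp\Hscr\p2_0(\Gbb)$, so the displayed formula is an orthogonal decomposition of $Y$ in $(\Hscr\p2(\Gbb),\|\cdot\|_{2})$.

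The main obstacle is the technical heart of (i): converting the a priori only local representation of Theorem~\ref{thm:wrp.srp}(ii) into genuine membership in the $\|\cdot\|_{\Hscr\p p}$-closed stable subspace, i.e.\ controlling the stopped stochastic integrals in $\Hscr\p p$ for arbitrary $p\geq1$; this is most efficiently dealt with by quoting the stable-subspace machinery of \cite[Chapter IV]{J79} rather than re-deriving it. Once (i) is available, (ii) is a routine Hilbert-space computation.
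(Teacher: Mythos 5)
Your proposal is correct and rests on the same two pillars as the paper's proof: Theorem \ref{thm:wrp.srp}(ii) plus the stable-subspace machinery of \cite[Chapter IV]{J79} for part (i), and the direct-sum decomposition $\Lscr\p2_\Gbb(\Zscr)=\bigoplus_i\Lscr\p2_\Gbb(Z\p i)$ plus \cite[Theorem 4.6]{J79} for part (ii). The only substantive difference is where you enter Jacod's Chapter IV in part (i): you invoke the global equivalence ``integral representation property of the family $\Leftrightarrow$ $\Lscr\p p_\Gbb(\Zscr)=\Hscr\p p_0(\Gbb)$ for all $p$'' as a black box (and, for $p=2$, sketch the orthogonal-complement argument via predictable brackets), whereas the paper verifies the dual criterion of \cite[Theorem 4.11 (b)]{J79} by hand: it takes a \emph{bounded} $Y\in\Hscr\p1_{\mathrm{loc},0}(\Gbb)$ orthogonal to $\Zscr$, writes $[Y,Y]=\sum_i K\p i\cdot[Y,Z\p i]$ from the representation of $Y$, uses the boundedness of $Y$ to check each summand is a local martingale, concludes $Y$ is orthogonal to itself hence null, and then removes the boundedness via \cite[Theorem 4.67]{J79}. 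Your version defers exactly this step — the conversion of an $\Lrm\p1_\mathrm{loc}$-representation into membership in the $\Hscr\p p$-closed stable subspace — to the literature, which is legitimate but is precisely the ``technical heart'' you flag; the paper's choice buys a self-contained argument at the cost of the quadratic-variation estimate. Your treatment of (ii) is, if anything, slightly more explicit than the paper's (which simply cites \cite[Remark 4.36]{J79} and \cite[Theorem 4.6]{J79}), and the added remark that $Y_0$ is $\|\cdot\|_2$-orthogonal to $\Hscr\p2_0(\Gbb)$ is a correct and harmless supplement.
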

\begin{proof}
We first verify (i). Let $Y$ be a \emph{bounded} $\Gbb$-martingale orthogonal to $\Zscr$ and satisfying $Y_0=0$. We are going to show that $Y=0$ identically. By the orthogonality of $Y$ and $\Zscr$, we have $[Y,Z\p i]\in\Hscr\p 1_{\mathrm{loc},0}(\Gbb)$, for $i=1,2,3$. By Theorem \ref{thm:wrp.srp} (ii), we can represent $Y$ as in \eqref{eq:prp.spp}. Hence,
\begin{equation}\label{eq:pn.Y}
[Y,Y]=K\p1\cdot[Y,Z\p1]+K\p2\cdot[Y,Z\p2]+K\p3\cdot[Y,Z\p3],\quad K\p i\in\Lrm\p1_\mathrm{loc}(Z\p i,\Gbb).
\end{equation}
Now, using that $Y$ is bounded by a constant, say $C>0$, and that $K\p i\in\Lrm\p1_\mathrm{loc}(Z\p i,\Gbb)$, we obtain
\[
\big((K\p i)\p2\cdot[[Y,Z\p i],[Y,Z\p i]]\big)\p{1/2}=\big(\sum(K\p i\Delta Y \Delta Z\p i)\p2\big)\p{1/2}\leq 2\,C\big((K\p i)\p2\cdot[Z\p i,Z\p i]\big)\p{1/2}\in\Ascr\p+_\mathrm{loc}(\Gbb),\quad i=1,2,3.
\]

This shows that each addend on the right-hand side of \eqref{eq:pn.Y} belongs to $\Hscr\p 1_{\mathrm{loc},0}(\Gbb)$. Therefore, we also have $[Y,Y]\in\Hscr\p 1_{\mathrm{loc},0}(\Gbb)$, implying that $Y$ is orthogonal to itself. Hence, by \cite[Lemma I.4.13 (a)]{JS00}, we get that $Y=0$ identically. This implies $\Lscr\p1_\Gbb(\Zscr)=\Hscr_0\p1(\Gbb)$ by \cite[Corollary 4.12]{J79}. If $Y\in\Hscr\p 1_{\mathrm{loc},0}(\Gbb)$ is orthogonal to $\Zscr$ (and not necessarily bounded), then,  applying \cite[Proposition 4.67]{J79}, we also get that $Y=0$ identically. Hence, \cite[Proposition 4.10 (b)]{J79} yields that if $Y\in\Hscr\p p_{\mathrm{loc},0}(\Gbb)$ is orthogonal to $\Zscr$, then $Y=0$ identically, for every $p\in[1,+\infty]$. Thus, since for every $p\geq1$ we have $\Zscr\subseteq\Hscr\p p_{\mathrm{loc},0}(\Gbb)$, \cite[Theorem 4.11 (b)]{J79} yields $\Lscr\p p_\Gbb(\Zscr)=\Hscr\p p_0(\Gbb)$, for every $p\geq1$. This completes the proof of (i). We now come to (ii). If $Z\p1$, $Z\p2$ and $Z\p3$ are pairwise orthogonal, then $\Lscr\p2_\Gbb(\Zscr)=\Lscr_\Gbb\p2(Z\p1)\oplus\Lscr_\Gbb\p2(Z\p2)\oplus\Lscr_\Gbb\p2(Z\p3)$ (see \cite[Remark 4.36]{J79}). Then, since we have $Y-Y_0\in\Hscr\p2_0(\Gbb)$ and, by (i), $\Lscr\p2_\Gbb(\Zscr)=\Hscr\p2_0(\Gbb)$, we deduce (ii) from \cite[Theorem 4.6]{J79}. The proof of the corollary is complete.
\end{proof}

Regarding  $\Zscr=\{Z\p1,Z\p2,Z\p3\}\subseteq\Hscr\p1_{\mathrm{loc},0}$ as the \emph{multidimensional local martingale} $Z:=(Z\p1,Z\p2,Z\p3)\p \top$ and combining Corollary \ref{cor:st.ssp.Z} and \cite[Theorem 4.60]{J79}, yields that every $Y\in\Hscr\p p(\Gbb)$ can be represented as
\begin{equation}\label{eq:mul.in.rep}
Y=Y_0+K\cdot Z,\quad K\in\ol\Lrm\p p(Z,\Gbb),
\end{equation}
where $K\cdot Z$ is the \emph{multidimensional stochastic integral} and $\ol\Lrm\p p(Z,\Gbb)$ is defined as in \cite[Eq.\ (4.59)]{J79}.

\begin{remark}[Uniqueness]\label{rem:un}
Notice that the multidimensional stochastic integral is an isometric mapping between $\Hscr\p p_0(\Gbb)$ endowed with the equivalent norm $\|Y\|_{\Hscr\p p}\p\prime:=\Ebb[[Y,Y]_\infty\p{p/2}]\p{1/p}$ and $\ol\Lrm\p p(Z,\Gbb)$, $p\in[1,+\infty)$ (see \cite[(4.59) and the subsequent comment]{J79}).  Using this fact it is easy to show that the integrand $K$ in \eqref{eq:mul.in.rep} is unique in the $\ol\Lrm\p p(Z,\Gbb)$-norm. Similarly, one gets the uniqueness for the integrands $K\p i$, $i=1,2,3$, in Corollary \ref{cor:st.ssp.Z} (ii).
\end{remark}
\section{Predictable Representation with respect to Orthogonal Martingales}\label{subs:orth}
In this section we discuss some cases in which the martingale representation in $\Gbb$ yields with respect to orthogonal local martingales: In Subsection \ref{subs:in.en} we study the special case of the independent enlargement. In Subsection \ref{subs:enl.rt} we consider the case in which $\Fbb$ is progressively enlarged by a random time $\tau$, that is, $H=1_{\lsi \tau,+\infty\lsi}$ and we do not require the independence of the filtrations $\Fbb$ and $\Hbb$.

Of course, the local martingales $Z\p1$, $Z\p2$ and $Z\p3$ can be always orthogonalized in a standard way by the classical orthogonalization procedure for locally square integrable local martingales.

\subsection{The independent enlargement}\label{subs:in.en}
The filtrations $\Fbb$ and $\Hbb$ are now assumed independent, that is, the process $H$ is independent of $\{\Rscr,X\}$.
From \cite[Theorem 1]{WG82}, the filtration $\Fbb\vee\Hbb$ is right-continuous and so $\Gbb=\Fbb\vee\Hbb$ holds. Furthermore, by the independence, $\Fbb$-local martingales and $\Hbb$-local martingales remain $\Gbb$-local martingales. Hence, $\ol X\p\Fbb,\ol H\p\Hbb\in\Hscr\p1_\mathrm{loc}(\Gbb)$ and  $X\p{p,\Gbb}=X\p{p,\Fbb}$, $H\p{p,\Gbb}=H\p{p,\Hbb}$, $\ol X\p\Gbb=\ol X\p\Fbb$ and $\ol H\p\Gbb=\ol H\p\Hbb$ hold.

The proof of the following lemma is postponed to the appendix:

\begin{lemma}\label{lem:indep.orth} Let $\Fbb$ and $\Hbb$ be independent.
 Then the locally bounded local martingales $\ol X\p\Gbb$, $\ol H\p\Gbb$ and $[\ol X\p\Gbb,\ol H\p\Gbb]$ are pairwise orthogonal.
\end{lemma}

We now come to the main result of this section.
\begin{theorem}\label{thm:indep}
If $\Fbb$ and $\Hbb$ are independent, then every $Y\in\Hscr\p2(\Gbb)$ can be represented as follow:
\begin{equation}\label{eq:orth.ind}
Y=Y_0+K\p 1\cdot \ol X\p{\Gbb}+K\p 2\cdot \ol H\p{\Gbb}+K\p 3\cdot [\ol X\p{\Gbb},\ol H\p{\Gbb}]
\end{equation}
where $K\p 1\in\Lrm\p2(\ol X\p{\Gbb},\Gbb)$, $K\p 2\in\Lrm\p2(\ol H\p{\Gbb},\Gbb)$ and $K\p 3\in \Lrm\p2([\ol X\p{\Gbb},\ol H\p{\Gbb}],\Gbb)$. Additionally, \eqref{eq:orth.ind} is an orthogonal decomposition of $Y$ in $(\Hscr\p2(\Gbb),\|\cdot\|_2)$.
\end{theorem}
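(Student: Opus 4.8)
The plan is to reduce Theorem~\ref{thm:indep} to Corollary~\ref{cor:st.ssp.Z}(ii) by identifying the three fundamental martingales $Z\p1, Z\p2, Z\p3$ with the (scaled) martingales $\ol X\p\Gbb$, $\ol H\p\Gbb$ and $[\ol X\p\Gbb,\ol H\p\Gbb]$ up to a linear change of coordinates, and then checking that this change of coordinates preserves both the stable subspace they generate and the orthogonality needed for an orthogonal decomposition. First I would record the consequences of independence already noted in the text: $\Gbb=\Fbb\vee\Hbb$, $X\p{p,\Gbb}=X\p{p,\Fbb}$, $H\p{p,\Gbb}=H\p{p,\Hbb}$, hence $\ol X\p\Gbb=\ol X\p\Fbb$ and $\ol H\p\Gbb=\ol H\p\Hbb$, and that by Lemma~\ref{lem:indep.orth} the three martingales $\ol X\p\Gbb$, $\ol H\p\Gbb$, $[\ol X\p\Gbb,\ol H\p\Gbb]$ are locally bounded and pairwise orthogonal, so in particular all lie in $\Hscr\p2_{\mathrm{loc},0}(\Gbb)$.

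The key computation is to express $Z\p1, Z\p2, Z\p3$ in terms of $\ol X\p\Gbb$, $\ol H\p\Gbb$, $[\ol X\p\Gbb,\ol H\p\Gbb]$. Since $[X,H]=\sum_{s\le\cdot}\Delta X_s\Delta H_s$ and $\ol X\p\Gbb$, $\ol H\p\Gbb$ differ from $X$, $H$ by predictable processes of finite variation, one has $[X,H]=[\ol X\p\Gbb,\ol H\p\Gbb]$, and therefore (using $X-X\p{p,\Gbb}=\ol X\p\Gbb$, etc.)
\begin{align*}
Z\p1&=(X-[X,H])-(X-[X,H])\p{p,\Gbb}=\ol X\p\Gbb-\bigl([\ol X\p\Gbb,\ol H\p\Gbb]-[\ol X\p\Gbb,\ol H\p\Gbb]\p{p,\Gbb}\bigr)=\ol X\p\Gbb-Z\p3,\\
Z\p2&=\ol H\p\Gbb-Z\p3,\qquad Z\p3=[\ol X\p\Gbb,\ol H\p\Gbb]-[\ol X\p\Gbb,\ol H\p\Gbb]\p{p,\Gbb}.
\end{align*}
Thus $\{Z\p1,Z\p2,Z\p3\}$ and $\{\ol X\p\Gbb,\ol H\p\Gbb,[\ol X\p\Gbb,\ol H\p\Gbb]\}$ are related by an invertible linear substitution with $\Gbb$-predictable (indeed constant) coefficients, once one also observes that $[\ol X\p\Gbb,\ol H\p\Gbb]-Z\p3=[\ol X\p\Gbb,\ol H\p\Gbb]\p{p,\Gbb}$ is predictable of finite variation and that $[\ol X\p\Gbb,\ol H\p\Gbb]=Z\p3+[\ol X\p\Gbb,\ol H\p\Gbb]\p{p,\Gbb}$; but since $[\ol X\p\Gbb,\ol H\p\Gbb]$ is itself a local martingale here, its compensator $[\ol X\p\Gbb,\ol H\p\Gbb]\p{p,\Gbb}$ must vanish, so in fact $Z\p3=[\ol X\p\Gbb,\ol H\p\Gbb]$, $Z\p1=\ol X\p\Gbb-[\ol X\p\Gbb,\ol H\p\Gbb]$ and $Z\p2=\ol H\p\Gbb-[\ol X\p\Gbb,\ol H\p\Gbb]$. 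This identification is the heart of the argument.

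From here I would finish as follows. Because the substitution between the two triples is linear and invertible with constant coefficients, the stable subspaces coincide: $\Lscr\p2_\Gbb(\{Z\p1,Z\p2,Z\p3\})=\Lscr\p2_\Gbb(\{\ol X\p\Gbb,\ol H\p\Gbb,[\ol X\p\Gbb,\ol H\p\Gbb]\})$; combined with Corollary~\ref{cor:st.ssp.Z}(i) this gives $\Lscr\p2_\Gbb(\{\ol X\p\Gbb,\ol H\p\Gbb,[\ol X\p\Gbb,\ol H\p\Gbb]\})=\Hscr\p2_0(\Gbb)$. Since by Lemma~\ref{lem:indep.orth}(ii) the three martingales $\ol X\p\Gbb$, $\ol H\p\Gbb$, $[\ol X\p\Gbb,\ol H\p\Gbb]$ are pairwise orthogonal elements of $\Hscr\p2_{\mathrm{loc},0}(\Gbb)$, the stable subspace they generate decomposes as an orthogonal direct sum $\Lscr\p2_\Gbb(\ol X\p\Gbb)\oplus\Lscr\p2_\Gbb(\ol H\p\Gbb)\oplus\Lscr\p2_\Gbb([\ol X\p\Gbb,\ol H\p\Gbb])$ (as in \cite[Remark 4.36]{J79}). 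Applying \cite[Theorem 4.6]{J79} to $Y-Y_0\in\Hscr\p2_0(\Gbb)$ then yields the representation \eqref{eq:orth.ind} with $K\p1\in\Lrm\p2(\ol X\p\Gbb,\Gbb)$, $K\p2\in\Lrm\p2(\ol H\p\Gbb,\Gbb)$, $K\p3\in\Lrm\p2([\ol X\p\Gbb,\ol H\p\Gbb],\Gbb)$, and the three summands are mutually orthogonal, i.e.\ this is an orthogonal decomposition in $(\Hscr\p2(\Gbb),\|\cdot\|_2)$. The only mildly delicate point, and the one I would state carefully, is the passage from $\Lrm\p1_\mathrm{loc}$-integrability (which is all Theorem~\ref{thm:wrp.srp} provides in general) to genuine $\Lrm\p2$-integrability of the integrands; here this is automatic because $Y\in\Hscr\p2(\Gbb)$ and the three integrals are orthogonal, so each $K\p i\cdot(\cdot)\in\Hscr\p2(\Gbb)$ by Pythagoras, which is exactly the content of \cite[Theorem 4.6]{J79}. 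I do not expect a serious obstacle; the main thing to get right is the bookkeeping showing $[X,H]=[\ol X\p\Gbb,\ol H\p\Gbb]$ and that this process is already a $\Gbb$-local martingale under independence, so that $Z\p3$ coincides with it rather than merely with its compensated version.
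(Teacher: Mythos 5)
Your overall strategy (reduce to Corollary \ref{cor:st.ssp.Z}, show that $\{Z\p1,Z\p2,Z\p3\}$ and $\{\ol X\p\Gbb,\ol H\p\Gbb,[\ol X\p\Gbb,\ol H\p\Gbb]\}$ generate the same stable subspace, then invoke Lemma \ref{lem:indep.orth} and \cite[Theorem 4.6]{J79}) is exactly the paper's. But the key identification on which you rest everything is false in general. The claim that $[X,H]=[\ol X\p\Gbb,\ol H\p\Gbb]$ because ``$\ol X\p\Gbb$, $\ol H\p\Gbb$ differ from $X$, $H$ by predictable processes of finite variation'' only works when those finite-variation processes are \emph{continuous}. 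For a predictable finite-variation process $A$ and a c\`adl\`ag process $Y$ one has $[A,Y]=\sum_{s\le\cdot}\Delta A_s\Delta Y_s$, which need not vanish. Independence of $\Fbb$ and $\Hbb$ does not force $X\p{p,\Gbb}$ or $H\p{p,\Gbb}$ to be continuous (take $X$ with a deterministic jump time, so that $X\p{p,\Gbb}=X$ jumps); the paper's Remark \ref{rem:mul.indep} treats continuity of a compensator as a genuinely special case in which the multiplicity drops from three to two. The correct expansion is the paper's \eqref{eq:comp.sq.b.ind},
\[
[\ol X\p{\Gbb},\ol H\p{\Gbb}]=[X, H]-[X\p{p,\Gbb},H\p{p,\Gbb}]-\Delta X\p{p,\Gbb}\cdot\ol H\p\Gbb-\Delta H\p{p,\Gbb}\cdot\ol X\p{\Gbb},
\]
so that $Z\p3=[\ol X\p{\Gbb},\ol H\p{\Gbb}]+\Delta X\p{p,\Gbb}\cdot\ol H\p\Gbb+\Delta H\p{p,\Gbb}\cdot\ol X\p{\Gbb}$ and similarly for $Z\p1,Z\p2$: the change of basis has $\Gbb$-predictable, non-constant coefficients, not constant ones. (Your side argument that $[\ol X\p\Gbb,\ol H\p\Gbb]\p{p,\Gbb}=0$ because $[\ol X\p\Gbb,\ol H\p\Gbb]\in\Mloc(\Gbb)$ is fine, but it cannot repair the first step.)

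This matters for the remainder of the argument. With predictable rather than constant coefficients, the equality of stable subspaces is no longer automatic: you must check that the coefficients, localized by stopping, are admissible integrands, i.e.\ that $1_{\lsi0,\sig\rsi}\Delta X\p{p,\Gbb}\in\Lrm\p2(\ol H\p\Gbb,\Gbb)$ and $1_{\lsi0,\sig\rsi}\Delta H\p{p,\Gbb}\in\Lrm\p2(\ol X\p\Gbb,\Gbb)$ whenever $(Z\p i)\p\sig\in\Hscr\p2(\Gbb)$. The paper does this by computing $\aPP{(Z\p3)\p\sig}{(Z\p3)\p\sig}$ as a sum of three nonnegative terms (using the pairwise orthogonality from Lemma \ref{lem:indep.orth}) and reading off the integrability of each term. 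You should add both the corrected expansion and this integrability check; the final appeal to the orthogonal direct sum decomposition and \cite[Theorem 4.6]{J79} then goes through as you describe.
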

\begin{proof}
First, using \cite[Proposition I.4.49 (b)]{JS00}, we compute
\begin{equation}\label{eq:comp.sq.b.ind}
\begin{split}
[\ol X\p{\Gbb},\ol H\p{\Gbb}]&=[X, H]-[X\p{p,\Gbb},H]-\Delta H\p{p,\Gbb}\cdot\ol X\p\Gbb
\\&=[X, H]-[X\p{p,\Gbb},H\p{p,\Gbb}]+[X\p{p,\Gbb},H\p{p,\Gbb}]-[X\p{p,\Gbb},H]-\Delta H\p{p,\Gbb}\cdot\ol X\p\Gbb
\\&=[X, H]-[X\p{p,\Gbb},H\p{p,\Gbb}]-\Delta X\p{p,\Gbb}\cdot\ol H\p\Gbb-\Delta H\p{p,\Gbb}\cdot\ol X\p{\Gbb}
\end{split}
\end{equation}
Because of Lemma \ref{lem:indep.orth}, $[\ol X\p{\Gbb},\ol H\p{\Gbb}]\in\Mloc(\Gbb)$ holds. Hence, Lemma \ref{lem:orth.mart} (iii) yields the identity $[X,H]\p{p,\Gbb}=[X\p{p,\Gbb},H\p{p,\Gbb}]$. Thus, from \eqref{eq:mart.Z1.Z2.Z3} and the previous computation, we obtain
\begin{gather}
Z\p3=[\ol X\p{\Gbb},\ol H\p{\Gbb}]+\Delta X\p{p,\Gbb}\cdot\ol H\p\Gbb+\Delta H\p{p,\Gbb}\cdot\ol X\p{\Gbb}\label{eq:rep.Z3}\\
Z\p1=-[\ol X\p{\Gbb},\ol H\p{\Gbb}]-\Delta X\p{p,\Gbb}\cdot\ol H\p\Gbb+(1-\Delta H\p{p,\Gbb})\cdot\ol X\p{\Gbb}\label{eq:rep.Z1}\\
Z\p2=-[\ol X\p{\Gbb},\ol H\p{\Gbb}]+(1-\Delta X\p{p,\Gbb})\cdot\ol H\p\Gbb-\Delta H\p{p,\Gbb}\cdot\ol X\p{\Gbb}\label{eq:rep.Z2}.
\end{gather}
Let us denote $\Xscr:=\{\ol X\p\Gbb,\ol H\p\Gbb,[\ol X\p\Gbb,\ol H\p\Gbb]\}$.
We are now going to show $\Lscr\p2_\Gbb(\Zscr)\subseteq\Lscr_\Gbb\p2(\Xscr)$. We set $\Sscr:=\{\{(Z\p i)\p\sig,\ \sig\  \Gbb\textnormal{-stopping time such that the stopped process }\ (Z\p i)\p\sig\in\Hscr\p2(\Gbb)\}, i=1,2,3\}$. It is enough to verify that $\Sscr\subseteq\Lscr_\Gbb\p2(\Xscr)$. Indeed, by the properties of stable subspaces, $\Lscr\p2_\Gbb(\Zscr)$ is the smallest stable subspace of $\Hscr\p2_0(\Gbb)$ containing $\Sscr$.
So, we consider an arbitrary $\Gbb$-stopping time $\sig$ such that the stopped process $(Z\p 3)\p\sig$ belongs to $\Hscr\p2_0(\Gbb)$. Notice that such stopping times exist, $Z\p3$ being locally bounded. From \eqref{eq:rep.Z3}, since by Lemma \ref{lem:indep.orth} the locally bounded $\Gbb$-local martingales $\ol X\p\Gbb$, $\ol H\p\Gbb$ and $[\ol X\p{\Gbb},\ol H\p{\Gbb}]$ are pairwise orthogonal, we get
\[\begin{split}
&\aPP{(Z\p 3)\p\sig}{(Z\p 3)\p\sig}=\\&\qquad1_{\lsi0,\sig\rsi}\cdot\aPP{[\ol X\p{\Gbb},\ol H\p{\Gbb}]}{[\ol X\p{\Gbb},\ol H\p{\Gbb}]}+1_{\lsi0,\sig\rsi}(\Delta X\p{p,\Gbb})\p2\cdot\aPP{\ol H\p\Gbb}{\ol H\p\Gbb}+1_{\lsi0,\sig\rsi}(\Delta H\p{p,\Gbb})\p2\cdot\aPP{\ol X\p\Gbb}{\ol X\p\Gbb}.
\end{split}
\]
Since $(Z\p3)\p\sig\in\Hscr\p2_0(\Gbb)$, it follows that $\aPP{(Z\p 3)\p\sig}{(Z\p 3)\p\sig}\in\Ascr\p+(\Gbb)$. From this we obtain the following inclusions: $1_{\lsi0,\sig\rsi}\in\Lrm\p2([\ol X\p{\Gbb},\ol H\p{\Gbb}],\Gbb)$, $1_{\lsi0,\sig\rsi}\Delta X\p{p,\Gbb}\in \Lrm\p2(\ol H\p{\Gbb},\Gbb)$ and $1_{\lsi0,\sig\rsi}\Delta H\p{p,\Gbb}\in \Lrm\p2(\ol X\p{\Gbb},\Gbb)$. Hence,  we have $(Z\p3)\p\sig\in\Lscr\p2_\Gbb(\Xscr)$, for every $\Gbb$-stopping time $\sig$ such that $(Z\p 3)\p\sig$ belongs to $\Hscr\p2(\Gbb)$. Analogously, we see that the same holds also for $Z\p1$ and $Z\p2$. Hence, we get $\Sscr\subseteq\Lscr\p2_\Gbb(\Xscr)$ which yields $\Lscr\p2_\Gbb(\Zscr)\subseteq\Lscr\p2_\Gbb(\Xscr)$. From Corollary \ref{cor:st.ssp.Z} (i), we have $\Lscr\p2_\Gbb(\Zscr)=\Hscr\p2_0(\Gbb)$. Therefore, we obtain $\Lscr\p2_\Gbb(\Xscr)=\Hscr_0\p2(\Gbb)$. By the pairwise orthogonality  of the involved local martingales, it follows that $\Lscr\p2_\Gbb(\Xscr)=\Lscr\p2_\Gbb(\ol H\p{\Gbb})\oplus\Lscr\p2_\Gbb(X\p{\Gbb})\oplus\Lscr\p2_\Gbb([\ol X\p{\Gbb},\ol H\p{\Gbb}])$ (see \cite[Remark 4.36]{J79}). This, recalling \cite[Theorem 4.6]{J79}, shows \eqref{eq:orth.ind}. The proof of the theorem is now complete.
\end{proof}
\paragraph*{The multiplicity of $\Gbb$ in the independent enlargement.}\label{rem:mul.indep}
The multiplicity (see Davis and Varaiya \cite{dv:mi}) or the spanning number (see Duffie \cite{du:se}) of a filtration is the minimal number of locally square integrable orthogonal local martingales which is necessary to represent all square integrable martingales (adapted to the filtration).

Theorem \ref{thm:indep} shows that, in general, the multiplicity of the filtration $\Gbb$ is less than or equal to \emph{three}. For an example in which the multiplicity of $\Gbb$ is equal to three we refer to \cite[Theorem 2.3]{CT17a} without Brownian part and applied to the following case: $X=1_{\lsi\et,+\infty\lsi}$ and $H=1_{\lsi\tau,+\infty\lsi}$, where $\et$ and $\tau$ are two \emph{independent} random variables taking values in $\{1,2,3\}$ and $\{2,4\}$ with strictly positive probability, respectively.

However, the multiplicity of $\Gbb$ can become less than or equal to two: For example, according to \cite[Lemma 7]{WG82}, under the independence assumption, if $\Delta X\p{p,\Fbb}\Delta H\p {p,\Hbb}=0$, then $\Delta\ol X\p\Gbb\Delta\ol H\p\Gbb=0$. Hence, $[\ol X\p\Gbb,\ol H\p\Gbb]=0$. This is the case, e.g., if $X$ is quasi-left continuous with respect to $\Fbb$ (or if $H$ is quasi-left continuous with respect to $\Hbb$).

Additionally, there are situations in which the multiplicity of $\Gbb$ is equal to one. Indeed, because of \cite[Theorem 3]{WG82}, this is the case if and only if the $\Gbb$-local martingale $\ol X\p\Fbb+\ol H\p\Hbb=\ol X\p\Gbb+\ol H\p\Gbb$ has the PRP with respect to $\Gbb$ or, equivalently, if and only if $X\p{p,\Fbb}$ and $H\p{p,\Hbb}$ are mutually singular with respect to $\Pscr(\Gbb)$, i.e., if there exists a $\Gbb$-predictable set $A$ such that $X\p{p,\Gbb}=1_A\cdot X\p{p,\Gbb}$ and  $H\p{p,\Gbb}=1_{A\p c}\cdot H\p{p,\Gbb}$.

It is easy to construct an example in which the multiplicity of $\Gbb$ is equal to one. Indeed, let $C$ be the monotonic and continuous extension of the Cantor function $c$ on $[0,1]$ to the whole positive half line, that is, $C(n+s):=n+c(s)$, $s\in[0,1]$, $n\geq0$. Then, $C$ is increasing, continuous, singular and satisfies $C_0=0$, $C_\infty=+\infty$. In particular, we can consider $C$ as a \emph{continuous time change}. Let now $H$ and $N$ be two independent homogeneous Poisson processes. We define $X_t:=N_{C_t}$, $t\geq0$. For simplicity, let the initial $\sig$-field $\Rscr$ be trivial, thus $\Fbb=\Xbb$. By the properties of continuous time changes (see, e.g., \cite[Theorem 10.17 (c)]{J79}) we have $X\p{p,\Fbb}_t=C_t$. This means that $X$ is a \emph{non-homogeneous} Poisson process. Furthermore, since $H\p{p,\Hbb}_t=t$, the $\Gbb$-dual predictable projections $X\p{p,\Gbb}=X\p{p,\Fbb}$ and $H\p{p,\Gbb}=H\p{p,\Hbb}$ are mutually singular on $\Pscr(\Gbb)$ and by \cite[Theorem 3]{WG82} the multiplicity of $\Gbb$ is equal to one.

\subsection{The Progressive Enlargement by a Random Time}\label{subs:enl.rt}
In this subsection we do not assume that the random time $\tau$ and the filtration $\Fbb$ are independent. We rather
state the following assumptions:
\begin{assumption}\label{ass:susbs}\indent
(1) We have $H=1_{\lsi\tau,+\infty\lsi}$, where $\tau:\Om\longrightarrow(0,+\infty]$ is a \emph{random time}.

(2) For every finite-valued $\Fbb$-predictable stopping time $\sig$ we have $\Delta X_\sig\Delta H_\sig=0$ a.s.
\end{assumption}

We stress that Assumption \ref{ass:susbs} (2) is satisfied if, for example, $X$ is quasi-left continuous with respect to $\Fbb$ or if $\tau$ avoids $\Fbb$-predictable stopping times, that is $\Pbb[\tau=\sig<+\infty]=0$, for every $\Fbb$-predictable time $\sig$. We notice that $\tau$ avoids $\Fbb$-predictable stopping times if and only if $H\p{p,\Gbb}$ is continuous, meaning that $\tau$ is $\Gbb$-totally inaccessible (see \cite[p.65]{Jeu80}).

Moreover, we observe that, if  Assumption \ref{ass:susbs} (1) holds, the quasi-left continuity of $X$ with respect to $\Fbb$ is preserved in $\Gbb$ only \emph{up to time} $\tau$. Indeed, by \cite[Corollary 2.17]{AJ17}, for every $\Gbb$-predictable stopping time $\et$ there exists an $\Fbb$-predictable stopping time $\sig$ such that $\et\wedge\tau=\sig\wedge\tau$. Hence, let $\et$ be a $\Gbb$-predictable stopping time. By \cite[Corollary 2.17]{AJ17} and the quasi-left continuity of $X$ with respect to
$\Fbb$, we have
\[
\Delta X\p\tau_\et=\Delta X_{\eta\wedge\tau}=\Delta X_{\sig\wedge\tau}=(\Delta X_\sig)\p\tau=0,
\]
where $\sig$ is an $\Fbb$-predictable stopping time. This shows that $X\p\tau$ is quasi-left continuous with respect to $\Gbb$. In  Counterexample \ref{cex:no.qlc} below we show that the $\Gbb$-quasi-left continuity of $X$ is, in general, not preserved after $\tau$.

We stress that, by Lemma \ref{lem:orth.mart} (v), $Z\p i$ and $Z\p j$ are orthogonal if and only if $[Z\p i,Z\p j]=0$, $i,j=1,2,3$, $i\neq j$.
\begin{theorem}\label{thm:orth}
Let Assumption \ref{ass:susbs} (1) hold.

\textnormal{(i)}  If  Assumption \ref{ass:susbs} (2) holds, then $[Z\p i,Z\p 3]=0$, $i=1,2$.

\textnormal{(ii)} If  $H\p{p,\Gbb}$  or $X\p{p,\Gbb}$ are continuous, then $[Z\p i,Z\p j]=0$, $i,j=1,2,3$, $i\neq j$.

\textnormal{(iii)} If $X$ is quasi-left continuous with respect to $\Fbb$ (i.e.\ $X\p{p,\Fbb}$ is continuous), then $[(Z\p1)\p\tau,Z\p j]=[Z\p2,Z\p3]=0$, $j=2,3$.
\end{theorem}
\begin{proof} First we observe that, by Assumption \ref{ass:susbs} (2), the process $U:=[X,H]$ is quasi-left continuous with respect to $\Gbb$.  Indeed, for any finite-valued $\Gbb$-predictable stopping time $\et$, since $U=U\p\tau$, by \cite[Corollary 2.12]{AJ17}, we have $\Delta U_\et=(\Delta X_\et\Delta H_\et)\p\tau=0$ a.s. Hence, $U\p{p,\Gbb}$ is continuous.

We now verify (i). We define the processes $Y=X-U$. Then $\Delta Y\Delta U=0$ and $\Delta Y\p{p,\Gbb}\Delta U\p{p,\Gbb}=0$. Applying Lemma \ref{lem:orth.mart} (iv) and (v) to $Y$ and $Z=U$ we deduce $[Z\p 1,Z\p 3]=0$. Hence, $Z\p1$ and $Z\p 3$ are orthogonal. The proof for $Z\p2$ is completely analogous. We now come to (ii). If $H\p{p,\Gbb}$ is continuous, then Assumption \ref{ass:susbs} (2) holds. Thus, by (i) it is sufficient to verify  $[Z\p1,Z\p 2]=0$. By the continuity of  $U\p{p,\Gbb}$, the process $(H-U)\p{p,\Gbb}$ is continuous as well. Hence, Lemma \ref{lem:orth.mart} (v) with $Y=Z\p1$ and $Z=Z\p2$ yields the result, since $X-U$ and $H-U$ have no common jumps. If $X\p{p,\Gbb}$ is continuous the proof is analogous. We now come to (iii). By the quasi-left continuity of $X$ with respect to $\Fbb$, Assumption \ref{ass:susbs} (2) is satisfied and $X\p\tau$ is quasi-left continuous with respect to $\Gbb$, i.e., $(X\p\tau)\p{p,\Gbb}$ is continuous. By $[(Z\p 1)\p \tau,Z\p3]=[Z\p 1,Z\p3]\p \tau$ and (i), it is enough to verify $[(Z\p1)\p\tau, Z\p2]=0$. Because of $(X\p{p,\Gbb})\p\tau=(X\p\tau)\p{p,\Gbb}$, we have $((X-U)\p\tau)\p{p,\Gbb}=(X\p{p,\Gbb})\p\tau-U\p{p,\Gbb}$, which is a continuous process. Since $Y=X\p\tau-U$ and $Z=H-U$ have no common jumps and $\Delta Y\p p\Delta Z\p p=0$, by Lemma \ref{lem:orth.mart} (v), we deduce the claim. The proof of the theorem is complete.
\end{proof}
We remark that, as shown in Counterexample \ref{cex:jum.com} below, Assumption \ref{ass:susbs} (2) alone is not sufficient to ensure that $Z\p1$ and $Z\p2$ are orthogonal. Indeed, it could happen, in general, that $(X-[X,H])\p{p,\Gbb}$ and $(H-[X,H])\p{p,\Gbb}$ have common jumps, although $X-[X,H]$ and $H-[X,H]$ do not have common jumps.

\begin{corollary}\label{cor:orth.av}
Let $\tau$ avoid $\Fbb$-stopping times. Then $Z\p 3=0$, $Z\p1=\ol X\p{p,\Gbb}$, $Z\p2=\ol H\p{p,\Gbb}$ and $[Z\p1,Z\p2]=0$.
\end{corollary}
\begin{proof}
By the avoidance assumption,  $Z\p 3=0$ holds and $H\p{p,\Gbb}$ is continuous. Theorem \ref{thm:orth} (ii) yields $[Z\p1,Z\p2]=0$. The proof is complete.
\end{proof}
Let $Y\in\Hscr\p2(\Gbb)$ and let $X$ be quasi-left continuous with respect to $\Fbb$.
From Corollary \ref{cor:st.ssp.Z}, Theorem \ref{thm:orth} (iii) and the properties of the multidimensional stochastic integral, we get
\begin{equation}\label{eq:rep.stopped}
Y\p\tau=Y_0+K\p1\cdot (Z\p1)\p\tau+K\p2\cdot Z\p2+K\p3\cdot Z\p3, \quad K\p i\in\Lrm\p2(Z\p i,\Gbb),\quad i=1,2,3,
\end{equation}
and this is an orthogonal representation in $(\Hscr\p2(\Gbb),\|\cdot\|_2)$.

\begin{remark}[The multiplicity of $\Gbb$]
Because of Corollary \ref{cor:st.ssp.Z} and Theorem \ref{thm:orth} (ii) we see that, if $H\p{p,\Gbb}$ is continuous, then the local martingales $Z\p1$, $Z\p2$ and $Z\p3$ are orthogonal and the multiplicity of the filtration $\Gbb$ is less than or equal to three. From Corollary \ref{cor:orth.av}, we see that, if $\tau$ avoids $\Fbb$-stopping times then the multiplicity of $\Gbb$ is less than or equal to two. Furthermore, it is possible to show that $X\p{p,\Gbb}$ and $H\p{p,\Gbb}$ are mutually singular on $\Pscr(\Gbb)$ if and only if the multiplicity of $\Gbb$ is equal to one, even without assuming the independence of $\tau$ and $\Fbb$ (see \cite[Theorem 5.4]{DTE20}). Finally, from \eqref{eq:rep.stopped}, if $X$ is quasi-left continuous with respect to $\Fbb$, we see that the space $\Hscr\p2_\tau(\Gbb)$ of square integrable martingales \emph{stopped at $\tau$} has always an integral representation with respect to three orthogonal martingales. So, following \cite[Remark 3.3]{AJR18}, we can say that the multiplicity of the class $\Hscr\p2_\tau(\Gbb)$ in $\Gbb$ is, in general, less than or equal to three. However, the multiplicity of the class $\Hscr\p2_\tau(\Gbb)$ can reduce to two or even to one.
\end{remark}
\paragraph*{The quasi-left continuity of $X$.} We conclude this subsection with a short discussion about the quasi-left-continuity of $X$ with respect to $\Fbb$.

The quasi-left continuity of an adapted \cadlag process is a property of the process but also of the filtration: In the enlargement of a filtration there will be more predictable times and hence we cannot expect that the quasi-left continuity is preserved. The following simple example illustrates this fact.
\begin{cexample}\label{cex:no.qlc}  Let $X$ be a homogeneous Poisson process with respect to $\Xbb$ and let $(\tau_n)_{n\geq 1}$ be the sequence of the jump-times of $\Xbb$. The process $X$ is not quasi-left continuous in the filtration $\Fbb$ obtained enlarging $\Xbb$ initially by the $\sig$-field $\Rscr=\sig(\tau_1)$. Indeed, $\tau_1$ is a jump-time of $X$ and, in $\Fbb$, it is $\Fscr_0$-measurable. Hence, $(\sig_n)_{n\geq1}$, $\sig_n:=(1-\frac1{2n})\tau_1$ is a sequence of $\Fbb$-stopping times announcing $\tau_1$. Therefore, $\tau_1$ is an $\Fbb$-predictable jump-time of $X$. Moreover, $X$ is not quasi-left continuous in the filtration $\Gbb$ obtained enlarging $\Xbb$ progressively by the random time $\tau=\frac{1}{2}(\tau_1+\tau_2)$. Indeed, the jump-time $\tau_2$ of $X$ is announced in $\Gbb$ by $(\vartheta_n)_{n\geq1}$, $\vartheta_n:=\frac1n\tau+(1-\frac1n)\tau_2$, and $\vartheta_n>\tau$ is a $\Gbb$-stopping time for every $n\geq1$ by \cite[Theorem III.16]{D72}. Hence, $\tau_2$ is a $\Gbb$-predictable jump-time of $X$.
\end{cexample}

We have assumed in Theorem \ref{thm:orth} (iii) that the process $X$ is quasi-left continuous with respect to the filtration $\Fbb$, obtained enlarging $\Xbb$ initially by the $\sig$-field $\Rscr$.
However, in general, it is more interesting to start assuming the quasi-left continuity of $X$ with  respect to $\Xbb$ and then to initially enlarge $\Xbb$ by $\Rscr$ in such a way to preserve the quasi-left continuity of $X$ in $\Fbb$. As an example in which this is possible, we consider the case of a quasi-left continuous point process $X$ with respect to the filtration $\Xbb$ and then we enlarge $\Xbb$ initially by a nonnegative random variable $L$ (i.e., $\Rscr=\sig(L)$) which satisfies \emph{Jacod's absolute continuity hypothesis} (see, e.g., \cite[Section 4.4]{AJ17}). Indeed, in this special but important case, from \cite[Theorem 4.25]{AJ17} one can show  that $X\p{p,\Fbb}$ is continuous or, equivalently, that $X$ is quasi-left continuous with respect to $\Fbb$. In this case, $X\p\tau$ is quasi-left continuous with respect to $\Gbb$ but this need not be true for $X$. If $\Fbb$ is immersed in $\Gbb$, then $X\p{p,\Fbb}=X\p{p,\Gbb}$ and $X$ is quasi-left continuous with respect to $\Gbb$.
\subsection{Two concrete examples}\label{subs:ex}
Let $X$ be a homogeneous Poisson process with respect to $\Xbb=(\Xscr_t)_{t\geq0}$ and let $(\tau_n)_{n\geq 1}$ be the sequence of the jump-times of $X$. We denote $\lambda$ the intensity of $X$. For a random time $\tau$, we set   $H=1_{\lsi\tau,+\infty\lsi}$. We give two examples of the progressive enlargement of the filtration $\Xbb$ by $\Hbb$, that is, $\Rscr$ is assumed trivial and $\Xbb=\Fbb$.

\paragraph*{Progressive enlargement by $X_T+1$.} We consider an arbitrary but fixed time $T>0$ and set $\tau = X_T+1$. Then, $\tau$ takes values in $(0,+\infty)$ and $H$ is a point process. In particular, $H_0=0$ holds. As we are going to show in Remark \ref{rem:not.hon} below, $\tau$ is not a honest time. Hence, this example cannot be recovered by Barlow \cite{ba:sf}. We furthermore stress that the filtration $\Xbb$ is not immersed in $\Gbb$. Indeed, the condition $\Pbb[\tau>t|\Xscr_t]=\Pbb[\tau>t|\Xscr_\infty]$ (which according to \cite[Lemma 3.8]{AJ17} is equivalent to the immersion property) is evidently not satisfied. Furthermore, the random time $\tau$ does not avoid all $\Xbb$-(predictable) stopping times since $\Pbb[\tau=n]>0$ holds, for every $n\geq1$. Hence, this example cannot be recovered by \cite{DT19}. In other words, it seems that the present example is new and can be only studied thanks to the results obtained in this paper.

We start noticing that $\tau$ avoids the jump-times of $X$ since $\tau_n$ is Gamma distributed and hence
\[
\Pbb[\tau=\tau_n]=\sum_{k=1}\p\infty\Pbb[\tau_n=k|\tau=k]\Pbb[\tau=k]=0,\quad n\geq1.
\]
Therefore, the set $\lsi\tau\rsi\cap\lsi\tau_n\rsi$ is evanescent, for every $n\geq1$ and, since   $[X,H]=\Delta X_\tau H$, we get $[X,H]=0$. Hence, we deduce that $Z\p3=0$, $Z\p 1=\ol X\p\Gbb$ and $Z\p2=\ol H\p\Gbb$ hold. We notice that $Z\p 1$ and $Z\p 2$ are orthogonal. To see this, we are going to explicitly compute the $\Gbb$-dual predictable projections $X\p{p,\Gbb}$ and $H\p{p,\Gbb}$ and apply Theorem \ref{thm:orth}.

We first compute $H\p{p,\Gbb}$. To this end, we denote by $\xi$ the law of $\tau$ and by $(u,x)\mapsto h(u,x)$ the function defined by $h(u,k):=\rme^{-\lambda u} \frac{(\lambda u)^k}{k!}$ if $k=1,\ldots$ and $u\geq0$, and $h(u,k):=0$ else. We observe that  $\Pbb[\tau=k]= h(T,k-1)$, $k\geq1$ and that
 \[
\xi(\rmd u)=\sum_{k=1}\p\infty h(T,k-1)\delta_k(\rmd u).
\]
 We denote by $P_t(\cdot,A)$ a regular version of $\Pbb[\tau\in A|\Xscr_t]$, $A\in\Bscr(\Rbb)$. It is then easy to verify (see \cite[Example 4.15]{AJ17}) that $P_t(\rmd u)$ is absolutely continuous with respect to $\xi$
 and that a version of the density is
\[
\alpha_t(u)=\sum_{k= 1}\p\infty \frac{z^k_t}{h(T,k-1)} 1_{\{u=k\}},
\]
where we denote by $z\p k$ the martingale satisfying
\[z^k_t=\Pbb [\tau=k\vert \cX_t]=\Pbb[X_T=k-1 \vert \cX_t]\quad \text{a.s.}\quad t\geq0.\]
In other words, $\tau$ satisfies \emph{Jacod's absolutely continuity condition} and, according to \cite[\S 5.3]{AJ17}, we say that $\tau$ is a $\Jscr$-time. Because of the independence of the increments of $X$ with respect to $\Xbb$, we get
\[z^k_t=\Pbb[X_T-X_t=k-1-X_t \vert \cX_t] =\begin{cases}h(T-t,k-1-X_t),\quad &t<T\\1_{\{X_T=k-1\}},\quad &t\geq T.\end{cases}\]
From \cite[Corollary 5.27 (b)]{AJ17}, the  $\Xbb$-dual predictable projection of $H$ is  $H\p{p,\Xbb}_t=\int_0^ t\alpha^u_{u-}(du)\xi(\rmd u)$ and
\[H\p{p,\Gbb}_t  =\int_0^{t\wedge\tau}\frac{1}{Z_{u-}}\alpha_{u-}(u)\xi(\rmd u)=\sum_{u\leq t\wedge\tau}\frac{1}{Z_{u-}}\alpha_{u-}(u)\xi(\{u\})\,,\]
where, $Z_t:=\sum_{k=1}^\infty 1_{\{t<k\}}z^k_t$ denotes the Az\'ema supermartingale. In particular, we notice that $H\p{p,\Gbb}$ is a purely discontinuous increasing process.

We now come to the computation of $X\p{p,\Gbb}$. To this aim, we first notice that, by \cite[Corollary 5.27 (b)]{AJ17},  the $\Xbb$-dual optional projection of $H$, say $H\p{o,\Xbb}$, is given by
\[
H\p{o,\Xbb}_t=\int_0^ t\alpha_{u}(u)\xi(\rmd u)=\sum_{u\leq t}\al_u(u)\xi (\{u\}).
\]
Hence, $H\p{o,\Xbb}$ is a purely discontinuous increasing process. By \cite[Theorem 1.43 (b)]{AJ17}, we deduce that $\tau$ is a \emph{thin} random time (see \cite[Definition 1.40]{AJ17}). Therefore, according to \cite[Proposition 5.33]{AJ17}, the $\Gbb$-special semimartingale decomposition of $\ol X\p\Xbb_t=X_t-\lm t$ is given by
\[
\ol X\p\Xbb_t= \widehat M_t + \int_0^{t\wedge \tau}\frac{1}{Z_{s-}}\rmd\cro{\ol X\p\Xbb,m}_s +\sum_{k=1}^\infty 1_{\{\tau=k\}} \int_0^t
1_{\{s>k\}}\frac{1}{z^k_{s-}}\rmd \cro{\ol X\p\Xbb,z^k}_s,
\]
where $\widehat M$ is a $\Gbb$-local martingale 
and $m$ is the $\Xbb$-martingale defined by $m_t:=\sum_{k=1}^\infty z^k_{t \wedge \tau_k}$. We also stress that the predictable brackets $\cro{\ol X\p\Xbb,m}$ and $ \cro{\ol X\p\Xbb,z^n}$ are computed with respect to the filtration $\Xbb$. We note that $\cro{\ol X\p\Xbb,m}=  \sum_{k=1}^\infty\cro{\ol X\p\Xbb, z^k}$. Hence, to obtain a closed formula of the $\Gbb$-dual predictable projection  $X\p{p,\Gbb}$, it remains to compute $\cro{\ol X\p\Xbb, z^k}$. To this end, we observe that
\[
\begin{split}\rmd z^k_t&=1_{\{t<T\}} \big[ \partial _t h(T-t,k-1-X_t) \rmd t +h(T-t, k-1-X_t)-h(T-t, k-1-X_{t-})\Big]\\&=1_{\{t<T\}}\Big[ \partial _t h(T-t, k-1-X_t) \rmd t +\big(h(T-t, k-2-X_{t-})-h(T-t, k-1-X_{t-})\big)\rmd X_t\Big]\\
 &=1_{t<T} \big(h(T-t,k-2-X_{t-})-h(T-t, k-1-X_{t-}) \big) \rmd \ol X\p \Xbb_t\,,
\end{split}
\]
where we used It\^o's calculus  to get the first equality and computed explicitly $\partial _u h(u,k) $ to obtain the third  equality. Finally, we obtain
 \[\rmd\cro{\ol X\p\Xbb,z^k}_t=  1_{t<T}\big(h(T-t, k-2-X_{t-})-h(T-t, k-1-X_{t-}) \big)\lambda \rmd t.\]\\
It follows that
\[ X^{p,\Gbb}_t= \lm t + \int_0^{t\wedge \tau}\frac{1}{Z_{s-}}\rmd \cro{\ol X\p\Xbb,m}_s +\sum_{k=1}^\infty 1_{\{\tau=k\}} \int_0^t
1_{\{s>k\}}\frac{1}{z^k_{s-}}\rmd \cro{\ol X\p\Xbb,z^k}_s\]
 and a closed-form formula can be obtained (we do not give details). We only stress that $ X^{p,\Gbb}$ is absolutely continuous with respect to the Lebesgue measure. Hence, since $[X,H]=0$, we can apply Theorem \ref{thm:orth} to deduce that $Z\p1=\ol X\p\Gbb$ and $Z\p2=\ol H\p{\Gbb}$ are orthogonal.

We now discuss the multiplicity of $\Gbb$ in this example. The local martingale $Z\p 1, Z\p2\in\Hscr\p2_\mathrm{loc}(\Gbb)$ are orthogonal and possess the PRP with respect to $\Gbb$, because of Theorem \ref{thm:wrp.srp} (ii). Furthermore, $X\p {p,\Gbb}$ is absolutely continuous, while $H\p{p,\Gbb}$ is a purely discontinuous increasing process. By Lemma \ref{lem:orth.mart} (vi), we see that $\aPP{Z\p 1}{Z\p 1}$ is again absolutely continuous and  $\aPP{Z\p 2}{Z\p 2}$ is purely discontinuous. So, these processes are mutually singular with respect to $\Pscr(\Gbb)$. Therefore, by \cite[Theorem 5.4]{DTE20}, the $\Gbb$-local martingale $Z\p 1+Z\p2$ has the PRP with respect to $\Gbb$ and the multiplicity of $\Gbb$ is equal to one.

Finally, we stress that taking $\tau=X_T$ will lead to similar but a bit more involved computations. Indeed, in this latter case $\Pbb[\tau=0]>0$ and $H$ is not a point process (in the sense of \cite[Section I.3(b).3]{JS00}), because $H_0\neq0$. However, we can consider $H\p\prime:=H-H_0$ which is a point process and set $\Rscr=\{\tau=0\}$. In this way, we get $\Hbb=\Hbb\p\prime\vee\Rscr$, where $\Hbb\p\prime$ denotes the filtration generated by $H\p\prime$, and $\Xbb\vee\Hbb\p\prime\vee\Rscr=\Xbb\vee\Hbb$. The dual predictable projection of $H\p\prime$ must be now computed with respect to $\Hbb$. In other words, one has to look at this case as an initial enlargement of $\Hbb\p\prime$ by $\Rscr$ and a progressive enlargement of $\Xbb$ by $\Hbb$.

\begin{remark}\label{rem:not.hon}
We now prove that $\tau$ is not a honest random time. Let  $\wt Z$ be the c\`adl\`ag $\Xbb$-supermartingale such that $\wt Z_\sig=\Pbb[\tau\geq\sig|\Xscr_\sig]$, for every finite-valued  $\Xbb$-stopping time $\sig$. According to \cite[Theorem 8.8 (b)]{AJ17}, $\tau$ is honest if and only if $\wt Z_\tau=1$. We are going to prove that $\wt Z_\tau<1$ with strictly positive probability.

From \cite[Proposition 2.5]{ACJ}, we get $ \wt Z_\tau=\sum_{n=1}^\infty 1_{\{\tau \leq n\}}z^n_\tau$.
It follows that, for $\tau<T$ and each $k\geq0$, 
\[\begin{split}
\wt Z_\tau 1_{\{\tau=k\}}&=1_{\{\tau=k\}}\sum_{n \geq k}z^n_k=1_{\{\tau=k\}}\sum_{n \geq k}h(T-k, n-1-X_k)=1_{\{\tau=k\}}\rme^{-\lambda (T-k)}\sum_{n \geq k} \frac{(\lambda (T-k))^{n-1-X_k}}{(n-1-X_k)!}
\end{split}\] and on $\{\tau=k\leq n\} \cap \{k<T\}$,  one has $X_k\leq  X_T=k-1\leq n-1$  so that $n-1-X_k\geq 0$. Setting $m= n-k$ yields
\[\sum_{m \geq 0} \frac{(\lambda (T-k))^{m+k-1-X_k}}{(m+k-1-X_k)!}\leq \sum_{n \geq 0} \frac{(\lambda (T-k))^{n}}{n !}= \rme^{\lambda (T-k)}\]
with a strict inequality if, for $m=0$, the left-hand side is not equal to 1 a.s., that is, if there exists $k$ such that $\Pbb [k-1-X_k=0] \neq 1$. For $k=2<T$ and on the set $\{\tau_1>2\}$ (which has a strictly positive probability) it is obvious that $k-1-X_k=1$ and we are done.
 Hence $\Pbb[\wt Z_\tau = 1]<1$       and $\tau$ is not   honest.
\end{remark}
\paragraph*{Progressive enlargement by the minimum.} Here, we consider  $\tau = \tau_1 \wedge a\tau_2$ with $0<a<1$. In this case $[X,H]_t= 1_{\{\tau_1 \leq t \}}1_{\{\tau_1<aT_2\}}\neq 0$ and we expect the multiplicity of $\Gbb$ to be, in general, less than or equal to three. However, we are not able to  give explicit form $\Gbb$-dual predictable projections of the processes $X$, $H$ and $ [X,H]$, since $\tau$ is nor thin neither satisfies Jacod's absolutely continuity conditions. Hence, we did not succeed in giving an explicit form of the $\Gbb$-local martingales  $Z\p i$, $i =1,2,3$. Notice that, in this example, $\tau$ is not honest, is not a $\Jscr$-time, is not thin, it does not avoid $\Xbb$-stopping times and $\Xbb$ is not immersed in $\Gbb$. Nevertheless, thanks to Theorem \ref{thm:wrp.srp}, we know that $(X,H)$ has the WRP and that $Z\p 1$, $Z\p 2$ and $Z\p 3$ have the PRP with respect to $\Gbb$.

\appendix
\section{Some technical proofs and results}\label{app:te.pr.re}

\begin{proof}[Proof of Proposition \ref{prop:poi.pr.pb}]
Since $X$ and $H$ are $\Gbb$-point processes, using the definition of the quadratic variation, we get $X-[X,H]=\sum_{0\leq s\leq\cdot}\Delta X_s (1-\Delta H_s)$, from which it immediately follows that $X-[X,H]$ is a point process with respect to $\Gbb$. Analogously, $H-[X,H]$ and $[X,H]$ are $\Gbb$-point processes. It is clear that these processes have pairwise no common jumps. Indeed, for example, $[X-[X,H],[X,H]]=[X,[X,H]]-[[X,H],[X,H]]=[X,H]-[X,H]=0$.
\end{proof}

\begin{proof}[Proof of Lemma \ref{lem:fil.eq}]By definition, $\Gbb$ is the smallest right-continuous filtration satisfying $\Rscr\subseteq\Gscr_0$ and containing $\Xbb$ and $\Hbb$. Therefore, since $\wt X$ is a $\Gbb$-semimartingale, $\mu\p{\wt X}$ is $\Gbb$-optional and hence $\Gbb\p\ast\subseteq\Gbb$, by the definition of $\Gbb\p\ast$. Moreover, $\Gbb\subseteq\Gbb\p\ast$ holds. Indeed, from \eqref{eq:ju.mea.spp}, we have the identities
$X=(1_{\{x_1=1,x_2=0\}}+1_{\{x_1=1,x_2=1\}})\ast\mu\p{\wt X}$ and $H=(1_{\{x_1=0,x_2=1\}}+1_{\{x_1=1,x_2=1\}})\ast\mu\p{\wt X}$,
showing that $X$ and $H$ are $\Gbb\p\ast$-optional processes, $\mu\p{\wt X}$ being $\Gbb\p\ast$-optional. Thus, $\Gbb=\Gbb\p\ast$and the proof is complete.
\end{proof}

\begin{proof}[Proof of Lemma \ref{lem:indep.orth}]
We only verify that $\ol X\p\Gbb\ol H\p\Gbb\in\Mloc(\Gbb)$, i.e., that $[\ol X\p\Gbb,\ol H\p\Gbb]\in\Mloc(\Gbb)$. For the pairwise orthogonality we refer to the proof of \cite[Proposition 3.4]{CT19}. So, let $(\tau_n)_{n\geq0}$ be a sequence of $\Fbb$-stopping times localizing $\ol X\p\Gbb=\ol X\p\Fbb$ to $\Hscr\p2(\Fbb)$ and $(\sigma_n)_{n\geq0}$ a sequence of $\Hbb$-stopping times localizing $\ol H\p\Gbb=\ol H\p{\Hbb}$ to $\Hscr\p2(\Hbb)$. By the independence of $\Fbb$ and $\Hbb$, the process $(\ol X\p\Gbb)\p{\tau_n}(\ol H\p\Gbb)\p{\sig_n}$ belongs to $\Hscr\p2(\Gbb)$. Therefore, setting $\rho_n:=\sig_n\wedge\tau_n$, we find a sequence of $\Gbb$-stopping times localizing $\ol X\p\Gbb\ol H\p\Gbb$ to $\Hscr\p2(\Gbb)$. Concerning the locally boundedness, it is enough to observe that $\ol X\p\Gbb$ and $\ol H\p\Gbb$ are locally bounded, since, being   compensated point processes, they have bounded jumps. Therefore, $[\ol X\p\Gbb,\ol H\p\Gbb]$ has bounded jumps and it is  locally bounded as well. The proof is complete.
\end{proof}

\noindent\textbf{A lemma on compensated point processes.}
Let $\Abb$ be an \emph{arbitrary} right-continuous filtration. We recall that two processes $A$ and $B$ in $\Ascr\p+_\mathrm{loc}(\Abb)$ are called \emph{associated} if they have the same compensator, that is $A\p p=B\p p$. Clearly, $A$ and $B$ are associated if and only if $A-B\in\Mloc$ (see \cite[Theorem V.38]{D72}).
\begin{lemma}\label{lem:orth.mart}
Let $Y$ and $Z$ be point processes with respect to $\Abb$ and let $Y\p p$ and $Z\p p$ denote the  $\Abb$-dual predictable projection of $Y$ and $Z$, respectively. Let us denote $\ol Y:=Y-Y\p p$ and $\ol Z:=Z-Z\p p$.

\textnormal{(i)} The processes $[Y\p p,Z]$, $[Y,Z\p p]$ and $[Y\p p,Z\p p]$ are locally bounded and belong to $\Ascr\p+_\mathrm{loc}(\Abb)$.

\textnormal{(ii)} The processes $[Y\p p,Z]$ and $[Y,Z\p p]$ are associated and their compensator is $[Y\p p,Z\p p]$.

\textnormal{(iii)} The process $[\ol Y,\ol Z]$ belongs to $\Mloc(\Abb)$ if and only if $[Y,Z]\p p=[Y\p p,Z\p p]$.

\textnormal{(iv)} Let $\Delta Y\Delta Z=0$. Then $[\ol Y,\ol Z]\in\Mloc(\Abb)$ if and only if $[\ol Y,\ol Z]=0$.

\textnormal{(v)} Let $\Delta Y\Delta Z=0$. Then $[\ol Y,\ol Z]\in\Mloc(\Abb)$ if and only if $\Delta Y\p p\Delta Z\p p=0$.

(vi) The identity $\aPP{\ol Y}{\ol Y}=(1-\Delta Y\p p)\cdot Y\p p$ holds.
\end{lemma}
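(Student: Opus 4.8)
\textbf{Proof plan for Lemma \ref{lem:orth.mart}.}

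The plan is to prove the five statements in order, since later items lean on earlier ones, and to exploit throughout that $Y$ and $Z$ are point processes — so $Y\p p$ and $Z\p p$ have bounded jumps (bounded by $1$, since $\Delta Y,\Delta Z\in\{0,1\}$ and the compensator of a process with jumps in $[0,1]$ again has jumps in $[0,1]$ by \cite[Corollary 5.28]{HWY92}). This boundedness is what makes all the covariation processes appearing in the statement locally integrable. For (i), I would note that $[Y\p p,Z]=\sum_{s\le\cdot}\Delta Y\p p_s\Delta Z_s$ is nonnegative (both factors are) and increasing, with jumps bounded by $\sup|\Delta Y\p p|\le1$; since $Z$ is locally bounded, $[Y\p p,Z]$ is dominated by a locally bounded increasing process and hence lies in $\Ascr\p+_\mathrm{loc}$. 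The same argument handles $[Y,Z\p p]$ and $[Y\p p,Z\p p]$.

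For (ii), the key identity is that for a predictable process of finite variation $V$ and any $\Fbb$-adapted process $U\in\Ascr\p+_\mathrm{loc}$, the dual predictable projection of $\Delta V\cdot U$ is $\Delta V\cdot U\p p$ (this is the ``predictable process pulls through the compensator'' property, \cite[Theorem I.3.17]{JS00} applied with the integrand $\Delta V$, which is predictable since $V$ is). Apply this with $V=Y\p p$, $U=Z$: since $[Y\p p,Z]=\Delta Y\p p\cdot Z$, its compensator is $\Delta Y\p p\cdot Z\p p=[Y\p p,Z\p p]$. Apply it with $V=Z\p p$, $U=Y$: the compensator of $[Y,Z\p p]=\Delta Z\p p\cdot Y$ is $\Delta Z\p p\cdot Y\p p=[Y\p p,Z\p p]$. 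So both have the same compensator $[Y\p p,Z\p p]$, i.e.\ they are associated. For (iii), expand $[\ol Y,\ol Z]=[Y-Y\p p,Z-Z\p p]=[Y,Z]-[Y\p p,Z]-[Y,Z\p p]+[Y\p p,Z\p p]$. By (i) and (ii), the three processes $[Y\p p,Z]$, $[Y,Z\p p]$, $[Y\p p,Z\p p]$ are in $\Ascr\p+_\mathrm{loc}$, and $[Y\p p,Z]-[Y\p p,Z\p p]$, $[Y,Z\p p]-[Y\p p,Z\p p]$ are local martingales. Hence $[\ol Y,\ol Z]-\big([Y,Z]-[Y\p p,Z\p p]\big)\in\Mloc$, so $[\ol Y,\ol Z]\in\Mloc$ iff $[Y,Z]-[Y\p p,Z\p p]\in\Mloc$. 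But $[Y,Z]\in\Ascr\p+_\mathrm{loc}$ (same argument as (i), since $\Delta Y\Delta Z\ge0$), so this difference of two processes in $\Ascr\p+_\mathrm{loc}$ is a local martingale iff $[Y\p p,Z\p p]$ is the compensator of $[Y,Z]$, i.e.\ iff $[Y,Z]\p p=[Y\p p,Z\p p]$.

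For (iv), if $\Delta Y\Delta Z=0$ then $[Y,Z]=\sum\Delta Y\Delta Z=0$, so by (iii) $[\ol Y,\ol Z]\in\Mloc$ iff $[Y\p p,Z\p p]=[Y,Z]\p p=0$; but then the expansion in (iii) collapses to $[\ol Y,\ol Z]=-[Y\p p,Z]-[Y,Z\p p]+[Y\p p,Z\p p]$, and by (ii) this is a local martingale which is also increasing-minus-its-own-compensator \dots\ here I would argue more directly: $[\ol Y,\ol Z]\in\Mloc$ together with $[Y,Z]=0$ gives, via (iii), $[Y\p p,Z\p p]=0$, and since $[Y\p p,Z\p p]=\Delta Y\p p\cdot Z\p p$ is the compensator of both $[Y\p p,Z]$ and $[Y,Z\p p]$ by (ii), vanishing of the compensator of a nonnegative process forces the process itself to vanish, so $[Y\p p,Z]=[Y,Z\p p]=0$; feeding this back, $[\ol Y,\ol Z]=[Y,Z]-[Y\p p,Z]-[Y,Z\p p]+[Y\p p,Z\p p]=0$. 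The converse is trivial. Finally (v): $\Delta Y\p p\Delta Z\p p=0$ is exactly the condition $[Y\p p,Z\p p]=\sum\Delta Y\p p\Delta Z\p p=0$; combined with $[Y,Z]=0$ from $\Delta Y\Delta Z=0$, (iii) gives $[\ol Y,\ol Z]\in\Mloc$ iff $[Y,Z]\p p=0=[Y\p p,Z\p p]$, i.e.\ iff $\Delta Y\p p\Delta Z\p p=0$. The main obstacle I anticipate is the careful bookkeeping in (ii) — pinning down that a predictable integrand commutes with the dual predictable projection, and that the relevant processes are genuinely in $\Ascr\p+_\mathrm{loc}$ rather than merely $\Ascr\p+_{\mathrm{loc}}$ after a further localization — together with the step in (iv) where one must deduce that a nonnegative optional process with vanishing compensator is itself zero (which follows since $0=\Ebb[(A)\p p_\infty]=\Ebb[A_\infty]$ along a localizing sequence).
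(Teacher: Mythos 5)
Your proposal is correct and follows essentially the same route as the paper's proof: the identity $[Y\p p,Z]=\Delta Y\p p\cdot Z$ with the predictable integrand pulling through the dual predictable projection for (ii), the bilinear expansion of $[\ol Y,\ol Z]$ combined with the association argument for (iii), and the observation that a nonnegative increasing process with vanishing compensator must itself vanish for (iv). No gaps.
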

\begin{proof}
We first verify (i). We only show that $[Y, Z\p p]$ is a locally bounded  increasing process, the proof for $[Y\p p, Z]$ and $[Y\p p, Z\p p]$ being completely analogous. We have $\Delta [Y, Z\p p]=\Delta Y\Delta Z\p p\geq0$, because $Y$ and $Z\p p$ are both increasing. Since $[Y,Z\p p]=\sum_{s\leq\cdot}\Delta Y_s\Delta Z_s\p p$, we obtain that $[Y,Z\p p]$ is an increasing process. Furthermore, since  $Y$ and $Z\p p$ have bounded jumps, $[Y, Z\p p]$ has bounded jumps too. Hence, it is a locally bounded process. The proof of (i) is complete.
 We now come to (ii). By \cite[Proposition I.4.49 (a)]{JS00} we have $[Y\p p,Z]=\Delta Y\p p\cdot Z$ and $[Y,Z\p p]=\Delta Z\p p\cdot Y$. Then, since $\Delta Y\p p$ is a predictable process, we have $[Y\p p,Z]\p p=(\Delta Y\p p\cdot Z)\p p=\Delta Y\p p\cdot Z\p p=[Y\p p,Z\p p]$, where in the last equality we again used \cite[Proposition I.4.49 (a)]{JS00}. Analogously, we get $[Y,Z\p p]\p p=[Y\p p,Z\p p]$ and the proof of (ii) is complete.
We now show (iii). First, we compute
\begin{equation}\label{eq:sbYsZs}
[\ol Y,\ol Z]=[Y,Z]-[Y\p p,Z]-[Y, Z\p p]+[Y\p p,Z\p p].
\end{equation}
By \eqref{eq:sbYsZs} and (ii),
since  $[Y\p p,Z\p p]-[Y, Z\p p]\in\Mloc$, we get $[\ol Y,\ol Z]\in\Mloc$ if and only if $[Y,Z]-[Y\p p,Z]\in\Mloc$. But this is the case if and only if $[Y,Z]$ and $[Y\p p,Z]$ are associated processes thus, by (ii), if and only if $[Y,Z]\p p=[Y\p p,Z\p p]$ holds. This shows (iii).
We now verify (iv). It is enough to show that if $[\ol Y,\ol Z]\in\Mloc$, then $[\ol Y,\ol Z]=0$. Therefore, let  $[\ol Y,\ol Z]\in\Mloc$ and $\Delta Y\Delta Z=0$. By (iii), we get $[Y\p p,Z\p p]=0$, since $[Y,Z]=0$ by assumption. Hence, by (ii), we obtain $[Y,Z\p p],[Y\p p,Z]\in\Mloc$ implying that $[Y,Z\p p]=[Y\p p,Z]=0$ since these are increasing processes starting at zero. Then (iv) follows immediately from \eqref{eq:sbYsZs}.
To verify (v), we observe that, since $[Y,Z]\p p=0$ by assumption, (iii) yields $[\ol Y,\ol Z]\in\Mloc$ if and only if $[Y\p p,Z\p p]=0$ or, equivalently, if and only if $\Delta Y\p p\Delta Z\p p=0$. Finally we show (vi). By the property of the predictable brackets of a locally square integrable local martingale, we have $\aPP{\ol Y}{\ol Y}=[\ol Y,\ol Y]\p p$. By \eqref{eq:sbYsZs} with $Z=Y$ and (ii), by the properties of the square brackets and of the dual predictable projection, we get
\[[\ol Y,\ol Y]\p p=([Y,Y]-[Y\p p,Y])\p p=(Y-\Delta Y\p p\cdot Y)\p p= Y\p p-\Delta Y\p p\cdot Y\p p=(1-\Delta Y\p p)\cdot Y\p p.\] The proof of the lemma is now complete.
\end{proof}

Notice that the relation $[Y,Z]\p p=[Y\p p,Z\p p]$ of Lemma \ref{lem:orth.mart} (iii) does not hold, in general: For example if $Y$ is a standard Poisson process and $Z=Y$, then $[Y,Y]_t\p p=Y\p p_t= t$, while $[Y\p p,Y\p p]=0$.

We also notice that the condition $\Delta Y\Delta Z=0$ is not sufficient for $\Delta Y\p p\Delta Z\p p=0$, in general, as the  following counterexample shows.

\begin{cexample}\label{cex:jum.com}\footnote{This counterexample has been suggested to us by Yuliya Mishura and Alexander Gushchin.}
Let $\Abb=(\Ascr_t)_{t\geq0}$ be such that $\Ascr_t$ is trivial for $t<1$ and $\Ascr_t=\Fscr$ for $t\geq1$. Then $\Abb$ is obviously a right-continuous filtration. Let $A,B\in\Fscr$ be disjoint. Then, the processes $Y$ and $Z$ defined by $Y_t=1_A1_{\{t\geq1\}}$ and $Z_t=1_B1_{\{t\geq1\}}$, respectively, are point processes with respect to $\Fbb$ and they satisfy $\Delta Y\Delta Z=0$. For the dual predictable projections $Y\p p$ and $Z\p p$ we have $Y\p p_t=\Pbb[A]1_{\{t\geq1\}}$ and $Z_t\p p=\Pbb[B]1_{\{t\geq1\}}$, respectively, and they have a common jump in $t=1$.
\end{cexample}

\paragraph*{Acknowledgement.} PDT gratefully acknowledges Martin Keller-Ressel and funding from the German Research Foundation (DFG) under grant ZUK 64.  MJ was supported by  the ``Chaire March\'es en Mutation'',
 French Banking Federation and ILB, Labex ANR 11-LABX-0019.

\end{document}